\newtheorem{thm}{Theorem}[section]  
\newtheorem{lemma}[thm]{Lemma}
\newtheorem{prop}[thm]{Proposition}
\newtheorem{corollary}[thm]{Corollary}
\theoremstyle{definition}
\newtheorem{remark}[thm]{Remark}
\newtheorem{question}[thm]{Question}
 \newtheorem{defn}[thm]{Definition}
 \newtheorem{example}[thm]{Example}
\def\min{\operatorname{min}}
\def\c1{\operatorname{c_1}}
\def\c2{\operatorname{c_2}}
\def\Sym{\operatorname{Sym}}
\def\mult{\operatorname{mult}}
\def\ZZ{{\mathbb Z}}
\def\PP{{\mathbb P}}
\def\DD{{\mathbb D}}
\def\L{{\mathcal L}}
\def\N{{\mathcal N}}
\def\O{{\mathcal O}}
\def\E{{\mathcal E}}
\def\H{{\mathcal H}}
\def\HH{{\mathfrak H}}
\def\Y{{\mathcal Y}}
\def\XX{{\mathfrak X}}
\def\e{\mathfrak{e}}
\def\f{\mathfrak{f}}
\def\c{\mathfrak{c}}
\def\cong{\simeq}
\def\leq{\leqslant}
\def\geq{\geqslant}
\def\+{\oplus}               % direct sum
\def\*{\otimes}                  % tensor product
\def\Pic{\operatorname{Pic}}
\def\Num{\operatorname{Num}}
\begin{document}

\title{Rational curves and Seshadri constants on Enriques surfaces}

\author[C.~Galati]{Concettina Galati}
\address{Concettina Galati, Dipartimento di Matematica e Informatica, Universit\`a della \linebreak Calabria, via P. Bucci, cubo 31B, 87036 Arcavacata di Rende (CS), Italy}
\email{concettina.galati@unical.it}

\author[A.~L.~Knutsen]{Andreas Leopold Knutsen}
\address{Andreas Leopold Knutsen, Department of Mathematics, University of Bergen, Postboks 7800,
5020 Bergen, Norway}
\email{andreas.knutsen@math.uib.no}

\date{July 4, 2023}
\keywords{Enriques surfaces, rational curves, Seshadri constants. \\ {\it{2020 Mathematics Subject Classification}}: Primary 14J28, 14C20; Secondary 14D06, 14H20.}
%\subjclass[2020]{Primary 14J28, 14C20; Secondary 14D06, 14H20.}
%\subjclass[2020]{14J28, 14C20, 14D06, 14H20}
%\keywords{\it{2020 Mathematics Subject Classification}: Primary 14J28, 14C20; Secondary 14D06, 14H20.}

\begin{abstract}
  We prove that classes of rational curves on very general Enriques surfaces are always $2$-divisible. As a consequence, we compute the Seshadri constant of  any big and nef line bundle on a very general Enriques surface, proving that it coincides with the value of the $\phi$-function introduced by Cossec.
\end{abstract}

\maketitle

\section{Introduction}

It is well-known that the only curves with negative self-intersection on Enriques surfaces are $(-2)$-curves, which are smooth and rational, and that
a {\it general}\footnote{By the word ``general'' in this note we mean that there is a nonempty Zariski-open subset in the moduli space satisfying the given conditions.}  Enriques surface does not contain such curves (cf. references in \cite[p. 577]{cos2}).
At the same time, an Enriques surface contains many elliptic pencils, and each such contains in general $12$ singular (nodal) rational fibers. Since any elliptic pencil has two double fibers, those rational curves are (numerically) $2$-divisible on the surface. Until recently, nothing has been known concerning the existence of rational curves of higher arithmetic genera.
However, a very recent result of Baltes \cite[Thm. 1.1]{Bal} says that any elliptically fibered $K3$ surface contains a sequence of irreducible rational curves $R_i$ with $R_i^2 \to \infty$. Since any Enriques surface admits a double {\'e}tale cover by such a $K3$ surface, one may draw the consequence that any Enriques surface also contains a sequence of rational curves whose self-intersections tend to $\infty$. One can however not deduce any information on precisely which kinds of linear systems do contain rational curves, even on a general Enriques surface. 

One of the two main results in this note is that there are strong constraints on linear systems containing rational curves on {\it very general}\footnote{By the phrase ``very general'' in this note we mean that there is a set that is the complement of a countable union of proper Zariski-closed subsets in the moduli space satisfying the given conditions.} Enriques surfaces, namely that they should be (numerically) $2$-divisible:

\begin{thm} \label{mainthm1}
  Let $S$ be a  very general Enriques surface. If $C \subset S$ is an irreducible rational curve, then $C$ is $2$-divisible in $\Num(S)$.
  \end{thm}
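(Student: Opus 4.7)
My plan is to pass to the $K3$ double cover $\pi\colon X \to S$ (with covering involution $\iota$) and exploit the fact that for a very general Enriques surface the N\'eron--Severi lattice of $X$ is as small as possible. The $2$-divisibility of $[C]$ will then follow from the way $\pi^{-1}(C)$ decomposes on $X$.

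The first step is to observe that $\pi^{-1}(C) = C_1 + C_2$, where $C_1,C_2$ are two distinct irreducible rational components interchanged by $\iota$. Since $C$ has geometric genus $0$, its normalization is $\mathbb P^1$; because every \'etale double cover of $\mathbb P^1$ is disconnected, normalizing the \'etale cover $\pi^{-1}(C) \to C$ produces two disjoint copies of $\mathbb P^1$, forcing $\pi^{-1}(C)$ to have exactly two irreducible components, each rational. The two components must be interchanged by $\iota$, since the deck involution acts freely on $X$ and each component has degree one over $C$. In $\Num(X)$ we therefore obtain
\[
[C_1] + [C_2] \,=\, \pi^*[C], \qquad [C_2] \,=\, \iota^*[C_1].
\]

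The central ingredient will be the lattice-theoretic statement: for a very general Enriques surface $S$, the N\'eron--Severi group of $X$ satisfies $\Num(X) = \pi^*\Num(S)$, and in particular $\iota^*$ acts as the identity on $\Num(X)$. The justification is a moduli count. The moduli of Enriques surfaces is $10$-dimensional and coincides with the moduli of $K3$ surfaces carrying an Enriques involution, so for the very general such pair the Picard rank of $X$ equals $10$, which is the rank of the invariant sublattice $\pi^*\Num(S)\cong U(2)\oplus E_8(-2)$. The existence of an extra class in $\Num(X)\setminus \pi^*\Num(S)$ (either genuinely anti-invariant, or making $\pi^*\Num(S)$ non-saturated) is a Noether--Lefschetz condition cutting out a proper closed subvariety of the Enriques moduli, and the ``very general'' hypothesis excludes the countable union of such subvarieties. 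The hard part will be the careful lattice analysis underlying this claim: one must verify that $\pi^*\Num(S)$ is saturated in $\Num(X)$ for the very general cover, since otherwise there could be ``half-classes'' making $\pi^*[C]$ divisible in $\Num(X)$ without $[C]$ being divisible in $\Num(S)$.

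Granted this input, the conclusion will be immediate: since $\iota^*$ acts trivially on $\Num(X)$, we have $[C_1] = \iota^*[C_1] = [C_2]$, hence $\pi^*[C] = 2[C_1]$. Writing $[C_1] = \pi^*M$ for some $M \in \Num(S)$ (using $\Num(X)=\pi^*\Num(S)$) and invoking the injectivity of $\pi^*$, we deduce $[C] = 2M$ in $\Num(S)$, which is the desired $2$-divisibility.
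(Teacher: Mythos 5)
Your argument is correct, but it takes a genuinely different route from the paper's. The paper never passes to the $K3$ cover: it degenerates $S$ to the normal crossing surface $R\cup_T P$ of \S 2 and observes (Proposition \ref{prop:1}) that for a non-$2$-divisible polarization one can arrange the limit line bundle to have odd degree on the fibers of the Albanese map $R\to E$, so every curve in every limit linear system has a component dominating the elliptic curve $E$ and hence cannot be a union of rational curves; the primitive isotropic case is handled separately (Proposition \ref{prop:2}) by showing such divisors degenerate to a $2$-nodal cycle whose nodes must smooth. Your route via the \'etale double cover $\pi\colon X\to S$ is sound: the splitting $\pi^{-1}(C)=C_1+C_2$ is correctly justified (the normalization of $\pi^{-1}(C)$ is the pullback of the cover along the normalization $\PP^1\to C$, hence disconnected; the two degree-one components are swapped by the fixed-point-free involution), and the lattice input you defer is in fact standard rather than hard: the invariant sublattice $H^2(X,\ZZ)^+\cong U(2)\oplus E_8(-2)$ is automatically primitive (if $nx$ is $\iota^*$-invariant, so is $x$), it equals $\pi^*\Num(S)$ by comparing discriminants, and the Noether--Lefschetz count shows $\NS(X)=H^2(X,\ZZ)^+$ off a countable union of proper closed subsets of moduli, whence $\iota^*=\id$ on $\NS(X)$ and $[C]=2M$ follows by injectivity of $\pi^*$ on $\Num(S)$. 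As for what each approach buys: yours is shorter and purely Hodge-theoretic, and delivers the stated theorem directly for very general $S$; the paper's degeneration proves the stronger intermediate fact that for each fixed non-$2$-divisible polarization type the absence of rational curves holds on a Zariski-open dense subset of the corresponding moduli component (not merely off a countable union), and the same degeneration is reused to control multiple points of limit curves in the proof of Theorem \ref{mainthm2}, which your method does not obviously provide.
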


This result was already known under the additional assumption that the rational curves be {\it nodal} (and with a completely different proof, see \cite[Prop. 1]{indam}). The proof strategy in this note is to degenerate $S$ to a union of an elliptic ruled surface and a blown-up plane, using the recent results on flat limits of Enriques surfaces from \cite{kn-JMPA,cdgk}, which will be recalled in \S \ref{sec:fl}, and prove that rational curves do not exist in any of the components of the limit Hilbert scheme when the curve classes are non-$2$-divisible. The proof will be given in \S \ref{sec:RC}. 

The following remains an open question:

\begin{question} \label{q}
  Does every (numerically) $2$-divisible complete linear system on a very general Enriques surface contain irreducible rational curves? If not, what are the conditions to contain such curves?
    \end{question}

By contrast, it has been recently proved in \cite[Thm. 1.1]{cdgk} that every non-$2$-divisible  complete linear system of genus $p \geq 2$ on a very general Enriques surface contains irreducible (nodal) curves of any geometric genus $g$ with $1 \leq g \leq p$.

The second main result of this note is obtained as an
application of  Theorem \ref{mainthm1} and concerns {\it Seshadri constants}. These were introduced by Demailly in \cite{de} to capture the concept of local positivity of line bundles.
For any point $x$ on a smooth projective variety $X$ and $H$ any big and nef line bundle
on $X$, the real number
\[\varepsilon(H,x):=\inf_{C \ni x}\frac{C \cdot H}{\mult_xC}\]
 is the \emph{Seshadri constant of $H$ at $x$}, where the
   infimum is taken over all curves $C$ passing through
   $x$ (it is easily seen that one can restrict to considering only irreducible curves).
The \emph{(global) Seshadri constant of $H$} is defined as 
\[ \varepsilon(H):=\inf_{x \in X} \varepsilon(H,x). \]
One has $\varepsilon(H) >0$ if and only if $H$ is ample, by Seshadri's criterion (see, e.g., \cite[Thm. 1.1]{prim}), and
$\varepsilon(H,x) \leq \sqrt[n]{H^n}$, where $n=\dim X$, by Kleiman's theorem
(see, e.g., \cite[Prop. 2.1.1]{prim}.

One of the very subtle points about Seshadri constants is that their values are only known in few cases. For instance, all known examples are rational and it is not even known whether  Seshadri constants are always rational or not. 
We refer to \cite{ba-sur,PAG,prim}, to mention a few, for accounts on Seshadri constants.

On Enriques surfaces, some results on Seshadri constants were obtained by Szemberg in \cite[\S 3]{Sz}, most notably, the fact that $\varepsilon(H)$ is always rational and that there always is a curve $C$ with a point of multiplicity $m$ such that $\varepsilon(H)=\frac{C \cdot H}{m}$ (see \cite[Thm. 3.3]{Sz} and its proof). Another simple observation relates the Seshadri constant to the $\phi$-invariant defined by Cossec \cite{cos2}. For any effective line bundle or divisor $H$ on an Enriques surface with $H^2>0$ one sets
\[ \phi(H):=\min\{ E \cdot H \; | \; E \; \; \mbox{is effective, nontrivial and  } \; E^2=0\},\]
which is a number that has many interesting geometrical implications (see, e.g., \cite{cd,KLpn,kn-man,Sz}) and has played a fundamental role in the study of Enriques surfaces. In particular, it is known that $\phi(H)^2 \leq H^2$ \cite[Cor, 2,7,1]{cd}.
 Clearly one has $\varepsilon(H) \leq \phi(H)$. We prove that, quite surprisingly, equality holds on very general Enriques surfaces:

\begin{thm} \label{mainthm2}
  Let $S$ be a very general Enriques surface and $H \in \Pic(S)$ big and nef.
  Then $\varepsilon(H)=\phi(H)$.
\end{thm}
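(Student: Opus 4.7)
The inequality $\varepsilon(H)\le\phi(H)$ is immediate: taking an effective $E$ with $E^2=0$ and $E\cdot H=\phi(H)$, any of the twelve nodal fibers $C\in|2E|$ provides, at its node $x$, the bound $\varepsilon(H,x)\le(C\cdot H)/\mult_xC=\phi(H)$. The content is the reverse inequality, which the plan is to prove by contradiction. Assume $\varepsilon(H)<\phi(H)$. By the theorem of Szemberg recalled in the introduction, there is an irreducible $C\subset S$ with $\mult_xC=m$ such that $\varepsilon(H)=(C\cdot H)/m$. The Hodge Index Theorem combined with $\phi(H)^2\le H^2$ gives
\[
C^2H^2\le(C\cdot H)^2<m^2\phi(H)^2\le m^2H^2,
\]
so $C^2<m^2$. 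Since a very general Enriques surface has no $(-2)$-curves, $C^2\ge 0$, while the $m$-fold point forces $p_a(C)\ge\binom{m}{2}$, hence $C^2\ge m(m-1)-2$ and the geometric genus bound $g(C)\le(m+1)/2$. When $C^2=0$ the curve $C$ is (up to torsion) a multiple of a half-fiber of an elliptic pencil, and $(C\cdot H)/m\ge\phi(H)$ follows directly from the definition of $\phi(H)$.

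Suppose $C^2>0$, so $C$ is big and nef. The plan is to split the argument according to whether $[C]$ is $2$-divisible in $\Num(S)$. When it is, write $C\equiv 2D$ with $D\in\Num(S)$; by Riemann--Roch and $D\cdot H>0$, either $D$ or $D+K_S$ is represented by an effective divisor. Because the Enriques lattice $U\oplus E_8(-1)$ is even, $D^2$ is an even integer in the narrow range $\lceil(m(m-1)-2)/4\rceil\le D^2<m^2/4$, yielding a discrete list of admissible pairs $(m,D^2)$: $(3,2),(5,6),(6,8),(7,10),(7,12),\ldots$ (the case $(2,0)$ falls into $C^2=0$). For each of these, apply the three-vector Hodge Index Theorem to the Gram matrix of $(E,D,H)$ for an effective isotropic class $E$ realizing $\phi(D)$, obtaining
\[
D\cdot H\ge\frac{(E\cdot D)\,H^2}{2(E\cdot H)}+\frac{(E\cdot H)\,D^2}{2(E\cdot D)},
\]
and use $E\cdot H\ge\phi(H)$, $\phi(D)^2\le D^2$, and the explicit values of $m$ and $D^2$ to conclude $2D\cdot H\ge m\phi(H)$, contradicting $C\cdot H<m\phi(H)$.

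When $[C]$ is not $2$-divisible, the contrapositive of Theorem \ref{mainthm1} forces $g(C)\ge 1$. By \cite[Thm.~1.1]{cdgk}, $|C|$ contains irreducible curves of every geometric genus $1\le g\le p_a(C)$, and in particular $\dim|C|\ge 1$. A dimension count on the Severi-type subvariety of $|C|$ parametrizing curves with an $m$-fold point at a varying $x\in S$ then gives $\dim|C|\ge\binom{m+1}{2}$, i.e.\ $C^2\ge m(m+1)$, contradicting $C^2<m^2$.

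The main obstacle I anticipate is the fine analysis in the $2$-divisible case: for each admissible $(m,D^2)$ one must control $\phi(D)$ sharply (and possibly exploit integrality constraints on $D\cdot H$ coming from the lattice) so that the three-vector Hodge bound really delivers $2D\cdot H\ge m\phi(H)$ in the regime $\phi(H)^2$ close to $H^2$. This may require going beyond the fact that $\Num(S)$ is even and using its specific structure as $U\oplus E_8(-1)$, together with a Cossec--Dolgachev-type decomposition of $D$ into effective isotropic summands. A secondary difficulty is in the non-$2$-divisible case, where the $m$-fold singularity of a Seshadri curve typically does not persist under deformations; the required dimension count therefore needs a careful Severi-variety analysis that combines \cite[Thm.~1.1]{cdgk} with standard deformation theory on a surface of numerically trivial canonical class.
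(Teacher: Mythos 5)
Your reduction to a curve $C$ with $m(m-1)-2\le C^2<m^2$ is fine, but the three branches of your plan each contain a genuine gap, and the central one is not repairable by the methods you propose. First, the case $C^2=0$ is \emph{not} immediate from the definition of $\phi$: you must exclude that $C$ is a \emph{primitive} isotropic class, i.e.\ an irreducible nodal half-fiber, for which one may have $C\cdot H=\phi(H)$ and $m=2$, giving $\tfrac{C\cdot H}{m}=\tfrac12\phi(H)<\phi(H)$. The paper devotes Proposition \ref{prop:2} (proved by degenerating to $X=R\cup_T P$) precisely to showing that on a general Enriques surface every primitive effective isotropic divisor is smooth; some such input is indispensable. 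Second, in the non-$2$-divisible case your dimension count is backwards: the existence of one curve in $|C|$ with an $m$-fold point bounds the dimension of the Severi-type locus from below \emph{in terms of} $\dim|C|$, it does not bound $\dim|C|$ from below — and indeed such curves are rigid (this is exactly why Corollary \ref{cor:mainthm2_1} holds), so $\dim|C|\ge\binom{m+1}{2}$ is false in general. The correct route is lattice-theoretic: the inequality $C^2\le l(C)^2+l(C)-2$ (Lemma \ref{lemma:2}) together with $C\cdot H\ge l(C)\phi(H)$ forces $l(C)=m-1$, hence $C^2=m(m-1)-2$ and $p_g(C)=0$; so the Seshadri curve is automatically \emph{rational} (Proposition \ref{prop:sessub}), and Theorem \ref{mainthm1} then kills the non-$2$-divisible case outright, with no genus-$\ge 1$ branch to analyze.

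The decisive gap is in your $2$-divisible case. Your list of admissible pairs $(m,D^2)$ contains $(7,10)$, which corresponds to $C\equiv 2(E_9+E_{10}+E_{9,10})$ with $C^2=40$ and $\phi(C)=l(C)=6$, and this case \emph{cannot} be eliminated by any Hodge-index or lattice argument: taking $H=C$ one gets $\tfrac{C\cdot H}{m}=\tfrac{40}{7}<6=\phi(H)$, so every numerical test is passed, and the only way to derive a contradiction is to prove geometrically that no irreducible curve in this linear system on a very general Enriques surface has a point of multiplicity $7$. The paper does this by degenerating $(S,\O_S(C))$ to $(X,(6\f,\,8\ell-2(\e_1+\cdots+\e_6)))$ and tracking the limit of the $7$-fold point via Lemma \ref{limit-sing-point}: the component $C_R\equiv 6\f$ is a union of six fibers, so it carries no point of multiplicity $7$ off $T$ and meets $T$ with multiplicity at most $6$ at any point, while a $7$-fold point on $C_P\in|8\ell-2(\e_1+\cdots+\e_6)|$ would force each pencil $|\ell-\e_i|$ to contribute a component through it, making $2\ell-(\e_1+\cdots+\e_6)$ effective — impossible for general blown-up points. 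Your proposal has no substitute for this geometric step, and you correctly sensed the obstacle; but it is not a matter of sharpening $\phi(D)$ or exploiting $U\oplus E_8(-1)$ more carefully — the numerics genuinely do not suffice.
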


The proof is given in \S \ref{sec:ses}: we first prove a result of independent interest (Proposition \ref{prop:sessub}) holding on {\it any} Enriques surface, namely that if $C$ is an irreducible curve with a point of multiplicity $m$ such that $\frac{C.H}{m} <\phi(H)$, then $C$ is rational and $C^2=m(m-1)-2$, which
in particular enables us to deduce a couple of lower bounds on Seshadri constants in Corollaries \ref{cor:mainthm2_2} and \ref{cor:mainthm2_1}.
Moreover, if $C^2>0$, then $m=\phi(C)+1$ and $C$ belongs to a short list of particular divisors;  in this list, only one case is $2$-divisible, with $m=7$, and is therefore the only case that can occur on a very general Enriques surface by Theorem \ref{mainthm1}. Using the same degeneration as above, we however show that a point of multiplicity as high as $7$  cannot occur on any limit curve.

\vspace{0.3cm} 
\noindent
{\it Acknowledgements.} C.~Galati acknowledges funding from  the GNSAGA of INdAM, the ERASMUS+ Staff Mobility Programme   and the Trond Mohn 
 Foundation Project ``Pure Mathematics in Norway''. A.~L.~Knutsen acknowledges funding from the Meltzer Foundation.

\section{Background results} \label{sec:fl}

We recall that a divisor $E$ on an Enriques surface is called {\it isotropic}
if $E^2=0$.

\subsection{Moduli spaces of polarized Enriques surfaces} 

We recall results from \cite{kn-JMPA}.

For any effective line bundle $L$ on an Enriques surface $S$ we set
\[
\varepsilon_L= \begin{cases}
  0, & \mbox{if $L+K_S$ is not $2$-divisible in $\Pic S$,} \\
  1, & \mbox{if $L+K_S$ is $2$-divisible in $\Pic S$}.
\end{cases}
\]

\begin{prop} \label{prop:exdec}
  Let $L$ be an effective line bundle on an Enriques surface $S$ such that $L^2 >0$. Then there are unique nonnegative integers $a_i$ (depending on $L$)  satisfying
  \[ a_1 \geq \cdots  \geq a_7 \;\;\mbox{and} \;\; a_9+a_{10} \geq a_0 \geq a_9 \geq a_{10}\]
such that there 
exist effective isotropic divisors 
  $E_1,\ldots,E_{10}$ (depending on $L$) such that $E_i \cdot E_j=1$ for all $i \neq j$, and an effective isotropic divisor $E_{9,10} \sim \frac{1}{3}\left(E_1+\cdots+E_{10}\right)-E_9-E_{10}$ so that we may write
\begin{equation} \label{eq:fundpres}
  L \sim a_1E_1+\cdots +a_7E_7+a_9E_9+a_{10}E_{10}+a_0E_{9,10}+\varepsilon_LK_S.
  \end{equation}
\end{prop}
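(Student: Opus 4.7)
The plan is to follow the lattice-theoretic machinery developed in \cite{kn-JMPA}, combining existence of a distinguished isotropic $10$-sequence on every Enriques surface with reduction theory for the Weyl group of the Enriques lattice. My argument would proceed in three steps: existence of a suitable $10$-sequence, linear decomposition over $\mathbb{Z}$, and normalization to the stated form.

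First I would invoke the classical result of Cossec--Dolgachev that on every Enriques surface there exist effective isotropic divisors $E_1,\ldots,E_{10}$ with $E_i\cdot E_j = 1$ for $i\neq j$, and verify by direct computation that the class $E_{9,10}:=\frac{1}{3}(E_1+\cdots+E_{10})-E_9-E_{10}$ is integral and isotropic, with $E_{9,10}\cdot E_i = 1$ for $i \leq 7$ and $E_{9,10}\cdot E_9 = E_{9,10}\cdot E_{10} = 2$. Its effectivity would follow from Riemann--Roch combined with the vanishing $h^0(-E_{9,10}) = 0$, the latter forced by $-E_{9,10}$ pairing negatively with the Fano polarization $\frac{1}{3}(E_1+\cdots+E_{10})$.

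Next, I would verify by a direct Gram-matrix computation that the ten classes $E_1,\ldots,E_7, E_9, E_{10}, E_{9,10}$ form a $\mathbb{Z}$-basis of $\Num(S)$, whose discriminant matches that of the Enriques lattice. Consequently, any $L \in \Pic(S)$ admits a unique integer linear combination of these classes modulo $K_S$; the correction term $\varepsilon_L K_S$ then absorbs the $2$-torsion ambiguity between $L$ and $L+K_S$, being present exactly when $L+K_S$ is $2$-divisible.

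The substantive step is enforcing nonnegativity of the $a_i$ together with the stated inequalities. The Weyl group acts on the set of isotropic $10$-sequences via permutations of $(E_1,\ldots,E_7)$, the involution $E_9 \leftrightarrow E_{10}$, and reflections in $(-2)$-roots such as $E_i - E_j$ or $E_{9,10}+E_9-E_{10}$; each such move produces a new $10$-sequence and reshuffles the coefficient tuple $(a_1,\ldots,a_7, a_9, a_{10}, a_0)$. The region cut out by $a_1 \geq \cdots \geq a_7$ and $a_9+a_{10} \geq a_0 \geq a_9 \geq a_{10}$ together with nonnegativity forms a fundamental domain for this action, so iteratively applying the reductions---using $\sum a_i^2$ as a descent function---terminates at a unique representative. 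The main obstacle I anticipate is pinpointing the correct chamber walls, particularly the mixed inequality $a_9+a_{10} \geq a_0$, which requires identifying the specific $(-2)$-root reflection that trades $a_0$ for $a_9+a_{10}$; uniqueness of the $a_i$ is then automatic from the fundamental-domain property.
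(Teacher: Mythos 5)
The paper offers no proof of this proposition: it is imported wholesale from \cite[Prop.~2.3 and 5.5]{kn-JMPA}, so your attempt has to be judged against what a complete argument needs. Your first two steps are essentially fine. The intersection numbers of $E_{9,10}$ are as you compute; its effectivity follows from $\chi(E_{9,10})=1$ together with the fact that $E_{9,10}$ pairs positively with the big effective class $\frac13(E_1+\cdots+E_{10})$, so $K_S-E_{9,10}$ cannot be effective; and the ten classes $E_1,\dots,E_7,E_9,E_{10},E_{9,10}$ do generate $\Num(S)$, since they generate the index-$3$ sublattice $\langle E_1,\dots,E_{10}\rangle$ (whose Gram matrix $J-I$ has determinant $-9$) together with the integral class $\frac13\sum E_i$, hence all of the unimodular Enriques lattice. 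So for a \emph{fixed} isotropic $10$-sequence the integer coefficients exist and are unique in $\Num(S)$.

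The gap is in your third step, which is where all the content of the proposition lives. First, the assertion that nonnegativity together with $a_1\geq\cdots\geq a_7$ and $a_9+a_{10}\geq a_0\geq a_9\geq a_{10}$ cuts out a fundamental domain for your reflection group is exactly the statement to be proved, and you concede you cannot identify the wall giving $a_9+a_{10}\geq a_0$; as written this is a restatement, not an argument. Second, and more seriously, your reduction never uses the hypothesis that $L$ is effective. Reflections in $(-2)$-roots act on $\Num(S)$ but do not preserve effectivity of isotropic classes (on surfaces containing $(-2)$-curves the chambers interact nontrivially with the effective cone), and your descent would apply verbatim to $-L$, which admits no decomposition into effective divisors with nonnegative coefficients. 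A correct argument must inject positivity, e.g.\ $L\cdot E>0$ for every effective isotropic $E$, which is how the inductive existence proof in \cite{kn-JMPA} proceeds (repeatedly subtracting an effective isotropic divisor computing $\phi$, in the spirit of \cite[Cor.~2.5.5]{cd}). Third, uniqueness of the $a_i$ is a statement across \emph{all} admissible isotropic $10$-sequences, not merely within one orbit of the group you describe; you would need transitivity of that group on effective $10$-sequences (again an effectivity-sensitive claim), whereas the cited proof characterizes the $a_i$ intrinsically through numerical invariants of $L$. Finally, the normalization of the torsion term to the specific value $\varepsilon_L$ requires an argument about replacing the $E_i$ by $E_i+K_S$, which you pass over in a sentence.
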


\begin{proof}
  This is \cite[Prop. 2.3 and 5.5]{kn-JMPA}. 
\end{proof}

We note that the divisors $E_i$ and $E_{9,10}$ are not unique (cf. \cite[Rem. 5.6]{kn-JMPA}).

Following \cite[Def. 5.8]{kn-JMPA} we call the coefficients $a_i=a_i(L)$, $i \in \{0,1,\ldots,7,9,10\}$ and $\varepsilon_L$ the {\it fundamental coefficients}
  of $L$ or of $(S,L)$, and \eqref{eq:fundpres} a {\it fundamental representation} of $L$ or of $(S,L)$.

  Then \cite[Thm. 5.9]{kn-JMPA} states that:

  \begin{thm} \label{thm:mainmod}
    The irreducible components of the moduli space of polarized Enriques surfaces are precisely the loci parametrizing pairs $(S,L)$ with the same fundamental coefficients.
  \end{thm}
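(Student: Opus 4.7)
The plan is to prove the two implications of Theorem \ref{thm:mainmod} separately. The direction ``same irreducible component implies same fundamental coefficients'' is the easier one: Proposition \ref{prop:exdec} presents the tuple $(a_0,a_1,\ldots,a_7,a_9,a_{10},\varepsilon_L)$ as integers uniquely determined by $L$, depending only on the numerical class $[L]\in\Num(S)$ and on whether $L+K_S$ is $2$-divisible in $\Pic(S)$. In any smooth connected family $(\mathcal{S},\mathcal{L})\to T$ of polarized Enriques surfaces, both data are locally constant on $T$, so the fundamental coefficients are as well.

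For the converse, suppose $(S,L)$ and $(S',L')$ share the same fundamental coefficients; the task is to construct a family in the polarized moduli space connecting them. I would proceed in two steps. First, invoke the classical irreducibility of the moduli space of \emph{unpolarized} Enriques surfaces to reduce to the case of a single Enriques surface $S$ carrying two line bundles $L$ and $L''$ (the parallel transport of $L'$) with the same fundamental coefficients. The question then becomes: when do two primitive classes $[L],[L'']\in\Num(S)$ with the required positivity lie in the same orbit under the image of geometric monodromy in $\Aut(\Num(S))$? Invoking the Torelli theorem for Enriques surfaces, this monodromy image is essentially the full orthogonal group of $\Num(S)\cong U\oplus E_8(-1)$, so the question reduces to a lattice-theoretic orbit problem on this rank-$10$ even lattice.

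The core remaining step, and the main obstacle, is thus purely lattice-theoretic: two primitive vectors in $U\oplus E_8(-1)$ with the same fundamental coefficients and the same $\varepsilon$-invariant lie in the same orthogonal orbit. To prove this, I would apply Proposition \ref{prop:exdec} to each vector to obtain fundamental representations as in \eqref{eq:fundpres} relative to configurations $(E_1,\ldots,E_{10},E_{9,10})$ of isotropic divisors satisfying $E_i\cdot E_j=1$ for $i\neq j$ together with the linear relation defining $E_{9,10}$, and then produce an orthogonal transformation of $\Num(S)$ carrying one configuration to the other; such a transformation automatically sends $L$ to $L'$, since the $K_S$-correction is already controlled by the common value $\varepsilon_L=\varepsilon_{L'}$. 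The delicate point is that the configuration consists of $11$ isotropic vectors inside a rank-$10$ lattice with prescribed pairwise intersection numbers, so there are non-trivial linear relations to reconcile. I expect to argue by induction, extending a partial matching one $E_i$ at a time by exploiting the ample supply of $(-2)$-reflections inside $O(U\oplus E_8(-1))$ to realize any needed change of isotropic sub-configuration, and then pinning down $E_{9,10}$ via the linear relation $E_{9,10}\sim\tfrac{1}{3}(E_1+\cdots+E_{10})-E_9-E_{10}$. Once this transitivity is in hand, the theorem follows at once.
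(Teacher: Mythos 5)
The paper does not actually prove this statement: it is quoted verbatim from \cite[Thm.~5.9]{kn-JMPA}, so the only ``proof'' in the present text is a citation. Your proposal is therefore being measured against the argument in that reference, whose overall architecture it does reproduce correctly: the easy direction via deformation-invariance of the fundamental coefficients (which is fine, since by Proposition \ref{prop:exdec} the $a_i$ are uniquely determined by $L$, hence are well-defined locally constant functions on the moduli space, as is $\varepsilon_L$ because the torsion of $\Pic$ is locally constant), and the hard direction via irreducibility of the unpolarized moduli space, Torelli, and a lattice-theoretic transitivity statement.

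The difficulty is that your sketch stops exactly where the real content begins. The assertion that two primitive classes in $U\oplus E_8(-1)$ with the same fundamental coefficients lie in one orbit under the monodromy group is essentially the whole theorem, and ``I expect to argue by induction\dots by exploiting the ample supply of $(-2)$-reflections'' is a plan, not a proof; the needed input is the transitivity of the Weyl group $W$ (generated by $(-2)$-reflections) on isotropic $10$-sequences $(E_1,\ldots,E_{10})$ with $E_i\cdot E_j=1$, a nontrivial result from \cite{cd} that you neither prove nor cite, together with a verification that matching configurations forces matching of the classes themselves (which uses the uniqueness clause of Proposition \ref{prop:exdec}). Two further gaps: (a) you need the monodromy group to be identified precisely --- it is the positive-cone-preserving Weyl group, not ``essentially the full orthogonal group,'' and one must check the relevant isometry is actually realized by monodromy of a family through both pairs; (b) the passage from $\Num$ to $\Pic$ is not ``already controlled'' by $\varepsilon_L=\varepsilon_{L'}$: one must show that, over each component of the numerically polarized moduli space, the locus where $L+K_S$ is $2$-divisible in $\Pic S$ and the locus where it is not are each irreducible. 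This is a separate irreducibility statement about an \'etale double cover of the numerically polarized moduli space (handled in \cite{kn-JMPA} via results of Gritsenko--Hulek type) and does not follow from the coefficients alone.
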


 \subsection{Flat limits of Enriques surfaces} 

We recall results from \cite[\S 3]{kn-JMPA} and \cite[\S 2]{cdgk} and refer to these papers for all proofs.  

Let $E$ be a smooth elliptic curve, $R:=\Sym^2(E)$ and
$\pi: R \to E$ be the (Albanese) map sending $x+y$ to $x\+ y$, where
$\+$ denotes the group operation on $E$.  
We set $\f_e:=\pi^{-1}(e)$ for any $e \in E$ and denote the algebraic equivalence class of the fibers by $\f$. 

Let $\eta$ be any of the three nonzero $2$-torsion points of $E$ and define
\[ T:= \{ e+ (e\+\eta) \; | \; e \in E\}. \]
Note that $\pi|_T:T \to E$ is an {\'e}tale double cover.
Embed $T$ as a cubic in $\PP^2$. Consider nine general points $y_1,\ldots,y_9 \in T$ such that
\begin{equation}
  \label{eq:cond}
  y_1+\cdots+y_9 \in |\N_{T/R} \* \N_{T/\PP^2}|,
\end{equation}
and denote by $P$ the blowing up of $\PP^2$ along $y_1,\ldots,y_9$. We denote the strict transform of $T$ on $P$ by the same name and the exceptional divisor on $P$ over $y_i$ by $\e_i$.
Consider the surface $X:=R \cup_T P$ obtained by gluing
$R$ and $P$ along $T$. 

A Cartier divisor, or a line bundle, $\L \in \Pic X$, is a pair
$(L',L'')$ such that $L' \in \Pic R$, $L'' \in \Pic P$ and $L'|_T \cong
L''|_T$. We denote by $\xi$ the Cartier divisor on $X$ represented by the pair $\xi= (T,-T)$ (cf. \cite[(3.3)]{fri2}).

We will make use of the following:

\begin{thm} \label{thm:deform} There is a flat family $f:\mathfrak{X} \to \DD$ over the unit disc such that $\mathfrak{X}$ is smooth and, setting $\mathfrak{X}_t=f^{-1}(t)$, such that
    \begin{itemize}
    \item $\mathfrak{X}_0=X$, and
     \item $\mathfrak{X}_t$ is a smooth general Enriques surface. 
    \end{itemize}

Furthermore, there is a short exact sequence
\begin{equation} \label{eq:piclimite}
  \xymatrix{
      0 \ar[r] & \ZZ \cdot \xi \ar[r] & \Pic X \cong H^2(\mathfrak{X},\ZZ) \ar[r]^{\iota_t^*} & H^2(\mathfrak{X}_t,\ZZ) \cong \Pic \mathfrak{X}_t  \ar[r] & 0,}
  \end{equation}
where $\iota_t: \mathfrak{X}_t \subset \mathfrak{X}$ is the inclusion. 
\end{thm}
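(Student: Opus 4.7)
The plan is to exhibit $X = R \cup_T P$ as a $d$-semistable normal crossings surface in Friedman's sense and then invoke smoothing theory to build the family $f: \mathfrak{X} \to \DD$ with smooth total space and Enriques generic fiber. The first step is to verify that condition \eqref{eq:cond} is precisely the $d$-semistability condition $\N_{T/R} \otimes \N_{T/P} \cong \O_T$. Since $P$ is the blow-up of $\PP^2$ along the points $y_1, \ldots, y_9 \in T$, one has $\N_{T/P} \cong \N_{T/\PP^2}(-y_1 - \cdots - y_9)$, so the required triviality reads $\O_T(y_1 + \cdots + y_9) \cong \N_{T/R} \otimes \N_{T/\PP^2}$ in $\Pic T$, which is exactly \eqref{eq:cond}. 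Locally along $T$, $d$-semistability is the condition that allows one to smooth the nodal singularity of $X$ inside a smooth total space via a local model of the form $xy=t$.

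Next, I would construct $\mathfrak{X}$ by invoking Friedman's smoothing theorem for $d$-semistable surfaces (in the form used in \cite{kn-JMPA}); alternatively, one can build the total space explicitly as the blow-up of a smooth family of simpler surfaces, as done in \cite{cdgk}. Once $\mathfrak{X}$ is produced, I would identify the generic fiber by computing the numerical invariants of $X$: a Mayer--Vietoris argument applied to $\O_X$ and the dualizing sheaf $\omega_X$ gives $\chi(\O_{\mathfrak{X}_t}) = \chi(\O_X) = 1$, $K_{\mathfrak{X}_t}^2 = 0$, and $p_g(\mathfrak{X}_t) = 0$, while the fact that $T \to E$ is \'etale of degree $2$ forces $\omega_X$ (hence $K_{\mathfrak{X}_t}$) to be a nontrivial $2$-torsion class. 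Thus $\mathfrak{X}_t$ is Enriques, and its generality follows because the construction depends on enough moduli (the choice of $E$, of $\eta$, and of the nine points) to cover a Zariski-dense locus in the moduli space of Enriques surfaces.

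For the exact sequence \eqref{eq:piclimite}, I would describe $\Pic X$ as the fiber product $\Pic R \times_{\Pic T} \Pic P$ of pairs of line bundles with matching restrictions to $T$, and then use the retraction of $\mathfrak{X}$ onto $X$ (combined with standard exponential-sequence checks) to identify this group with $H^2(\mathfrak{X}, \ZZ)$. Surjectivity of $\iota_t^*$ follows because any line bundle on the generic Enriques fiber extends to the smooth total space $\mathfrak{X}$ after possibly a base change. To identify the kernel: by adjunction one has $\O_{\mathfrak{X}}(R)|_R \cong \N_{R/\mathfrak{X}} \cong \O_R(-T)$ and $\O_{\mathfrak{X}}(R)|_P \cong \O_P(T)$, so $\O_{\mathfrak{X}}(R)|_X$ represents the class $(-T, T) = -\xi$; since $R$ is disjoint from $\mathfrak{X}_t$ for $t \neq 0$, this class lies in $\ker \iota_t^*$, and a rank comparison using the Euler characteristics above then shows it generates the kernel.

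The principal obstacle is the smoothing step: producing $\mathfrak{X}$ with smooth total space and ensuring that the generic fiber lies in the intended component of the moduli space of Enriques surfaces, rather than being merely some smooth surface that happens to degenerate to $X$. This is the technical heart of the arguments in \cite{kn-JMPA} and \cite{cdgk} on which I would rely.
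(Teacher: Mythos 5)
The paper does not prove this theorem; it simply cites \cite[Thm. 2.2]{cdgk} and \cite[Prop. 3.7, Thm. 3.10, Cor. 3.11]{kn-JMPA}, so your proposal and the paper ultimately rest on the same external results. Your outline correctly reconstructs what those references do (condition \eqref{eq:cond} as $d$-semistability, the smoothing to a family with smooth total space, identification of the generic fiber as Enriques via $\chi(\O)=1$, $p_g=0$ and $2$-torsion canonical class, and the kernel $\ZZ\cdot\xi$ generated by $\O_{\mathfrak{X}}(R)|_X$), and you rightly identify the smoothing step and the genericity of the fiber as the technical points for which the cited papers must be invoked.
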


\begin{proof}
  This is \cite[Thm. 2.2]{cdgk}, or \cite[Prop. 3.7, Thm. 3.10 and Cor. 3.11]{kn-JMPA}.
\end{proof}

The latter result both guarantees that $X$ smooths to a general Enriques surface and that all Cartier divisors on $X$ lift in the deformation. This means that for any line bundle $L_0=(L_R,L_P)$ on $X$, there is a line bundle $\L$ on $\XX$ such that
$\L|_X=L_0$.

We will in the following be particularly interested in the lifting of the following isotropic divisors on $X$:

\begin{example} \label{ex:iso}
Let $|A|$ be a pencil on $P$ satisfying $A^2=0$ and $T \cdot A=-K_P \cdot A=2$. Then its general member is a smooth rational curve. The induced $g^1_2$ on $T$ has, by Riemann-Hurwitz, two members that   are also fibers of $\pi|_T:T \to E$.
In other words, there are two fibers $\f_{\alpha(A)}$ and $\f_{\alpha'(A)}$ of $\pi:R \to E$ such that
$\f_{\alpha(A)} \cap T$ and $\f_{\alpha'(A)}  \cap T$ belong to this $g^1_2$. 
One easily verifies that $\alpha'(A)=\alpha(A) \+\eta$. In particular, there are two uniquely defined points $\alpha(A)$ and $\alpha(A) \+ \eta$ on $E$ such that the pairs
\begin{equation} \label{eq:excar}
  (\f_{\alpha(A)},A) \; \; \mbox{and} \; \; (\f_{\alpha(A)\+\eta},A)
\end{equation}
define isotropic Cartier divisors on $X$, whose numerical class we simply denote by $(\f,A)$. Their linear systems each contain one member.
One may check that their difference is $K_{X}$ and that
$(\f,A) \cdot (\f,A')=A \cdot A'$ for any two such pencils $|A|$ and $|A'|$.

We will apply this to $A$ of the form $\ell-\e_i$ and $2\ell-\e_i-\e_j-\e_k-\e_l$, where the exceptional divisors are all distinct. If the blown up points are general enough, the unique members of the linear systems of \eqref{eq:excar} consist  of the union of two smooth rational curves intersecting transversally in two points (on $T$). 
\end{example}

Theorem \ref{thm:deform} also says that limits of line bundles on $X$ are unique up to twisting by multiples of $\xi$. In particular,  there is  an irreducible component $h:\HH \to \DD$ of the relative Hilbert scheme such that $\HH_t:=h^{-1}(t)=|\L|_{\mathfrak{X}_t}|$ for all $t \neq 0$,  and
$\HH_0:= h^{-1}(0)$ has various irreducible components; indeed
\[ \HH_0=\cup_{a \in \ZZ}\Big\{ (C_R,C_P) \; | \; C_R \in |L_R(aT)|, \; C_P \in |L_P(-aT)|, \; C_R \cap T=C_P\cap T \Big\}\]
(cf., e.g., \cite[\S 2]{CD}).  We call the latter the {\it limit linear systems} of $|\L|_{\mathfrak{X}_t}|$. 
We will be interested in limits on $X$ of singular curves on $\mathfrak{X}_t$. For the proof of Theorem \ref{mainthm2} we will need the following general result on limit curve singularities, which is implicitly contained in \cite{galati}.

\begin{lemma}[\cite{galati}]\label{limit-sing-point}
Assume that, for $t\in\mathbb D\setminus \{0\}$ general, there exists a reduced and irreducible curve $C_t\in \HH_t$ 
with a point $p_t$ of multiplicity $m$ as  its only singularity.  Denote by $\mathfrak T\subset \mathfrak H$ the Zariski closure  in $\mathfrak H$  of the family of locally trivial deformations of the $C_t$.   Let $C_0= C_R \cup C_P \subset X$ be a curve parametrized by any point in $\mathfrak T \cap \HH_0$ such that $C_0$ does not contain $T$. Then there is a point $p_0 \in C_0$   (which is the limit of $p_t)$ with one of the following properties:
    \begin{itemize}
  \item[(i)] $p_0 \not \in T$ and $p_0$ is a point of multiplicity $\geq m$ on $C_0$;
  \item[(ii)] $p_0 \in T$ and
    $C_R \cap T=C_P \cap T$ contains $p_0$ with multiplicity $\geq m$.
  \end{itemize}
\end{lemma}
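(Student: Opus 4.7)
The plan is to track the singular points $p_t$ as $t\to 0$ and to interpret multiplicity via a blow-up along the curve of singularities. After a finite base change and normalization if necessary, the points $p_t$ sweep out a curve $\Sigma\subset\XX$ meeting the central fibre $X$ in a single point $p_0$; this is the candidate limit point in the statement. The proof then splits according to whether $p_0\in T$.

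In case (i), where $p_0\notin T$, both $\XX$ and $X$ are smooth at $p_0$, so near $p_0$ we have a flat family $\mathfrak{T}$ of curves in a smooth family of smooth surfaces. Multiplicity at the marked section $\Sigma$ is upper-semicontinuous in that setting: blowing up $\XX$ along $\Sigma$, the exceptional divisor is a $\PP^1$-bundle over $\Sigma$, and the strict transform of $\mathfrak{T}$ meets each exceptional $\PP^1$-fibre in a $0$-dimensional scheme of length equal to the multiplicity, which specializes flatly as $t\to 0$. This yields $\mult_{p_0}(C_0)\geq m$, which is (i).

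In case (ii), where $p_0\in T$, the total space $\XX$ is still smooth and $3$-dimensional. Choose local analytic coordinates $(x,y,z)$ on $\XX$ near $p_0$ so that $f=xy$, with $R=\{x=0\}$, $P=\{y=0\}$ and $T=\{x=y=0\}$ locally. Blow up $\XX$ along $\Sigma$ to obtain a smooth threefold $\widetilde{\XX}$ with exceptional divisor $E$ a $\PP^1$-bundle over $\Sigma$. For $t\neq 0$ the surface $\widetilde{\XX}_t$ is the ordinary blow-up of $\XX_t$ at $p_t$, and $\widetilde{C_t}\cdot E_{p_t}=m$. For $t=0$, the central fibre $\widetilde{X}$ of $\widetilde{\XX}$ consists of the strict transforms $\widetilde{R}$ and $\widetilde{P}$ meeting along the strict transform $\widetilde{T}$, together with the exceptional $\PP^1$-fibre $E_{p_0}$; this $E_{p_0}$ meets each of $\widetilde{R}$ and $\widetilde{P}$ in exactly one point, namely the point $q\in\widetilde{T}$ lying over $p_0$. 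Since $C_0\not\supset T$, the strict transform of $C_0$ cannot contain $E_{p_0}$ as a component, so $\widetilde{C_0}\cdot E_{p_0}$ is a well-defined $0$-dimensional scheme of length $\geq m$ by flatness of the intersection product, and its support lies at $q$. A direct local computation in $(x,y,z)$ then identifies this length with the common value $\length_{p_0}(C_R\cap T)=\length_{p_0}(C_P\cap T)$, giving (ii).

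The main obstacle is the intersection-theoretic bookkeeping in case (ii): one must handle a preliminary base change $t\mapsto t^k$ that is in general necessary because, when $p_0\in T$, the function $t=xy$ restricted to $\Sigma$ vanishes to order $\geq 2$ at $p_0$, so $\Sigma$ is really a ramified multisection rather than a section; and one has to verify in the local model both that no component of $E_{p_0}$ is absorbed into the strict transform of $C_0$ and that the length of $\widetilde{C_0}\cap E_{p_0}$ at $q$ coincides with $\length_{p_0}(C_R\cap T)$. Everything else is a standard flat-limit argument contained in \cite{galati}.
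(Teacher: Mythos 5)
Your case (i) is fine and agrees with what the paper does on the components $R$ and $P$. The gap is in case (ii), which is the substance of the lemma. First, your setup there is internally inconsistent: you cannot simultaneously have a smooth total space with local equation $f=xy$ and a \emph{section} $\Sigma$ through a point $p_0\in T=\{x=y=0\}$, because $t|_{\Sigma}=x|_{\Sigma}\cdot y|_{\Sigma}$ then vanishes to order $\geq 2$. You acknowledge that a base change $t\mapsto t^k$ is needed, but after that base change the total space acquires an $A_{k-1}$-singularity along $T$ (local equation $xy=t^k$), so the smooth local model in which you propose to compute no longer exists. Resolving this is not mere bookkeeping: the paper's proof performs exactly this semistable reduction, and the $k-1$ blow-ups insert a chain of ruled surfaces $\E_1,\ldots,\E_{k-1}$ over $T$ between $R$ and $P$. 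The whole point is that the limit $p_0'$ of the section of singular points must land in the \emph{smooth} locus of the new reduced normal-crossing central fiber, hence either on $R\cup P$ (giving (i)) or in the interior of some $\E_i$; and that the limit curve meets each $\E_i$ in a union of fibers of $\E_i\to T$, so a point of multiplicity $\geq m$ forces a fiber of multiplicity $\geq m$, which propagated through the chain produces contact of order $\geq m$ of $C_R$ and $C_P$ with $T$. None of this structure is visible in a single blow-up of $\XX$ along $\Sigma$.

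Second, the step you defer to ``a direct local computation'' is precisely the content of case (ii), and the surrounding intersection-theoretic argument is shaky: the relevant object is not the strict transform of $C_0$ but the flat limit of the curves $\widetilde{C_t}$ in $\Bl_{\Sigma}\XX$, and that limit can contain the exceptional fiber $E_{p_0}$ as a component even though $C_0\not\supset T$ (the central fiber of the blown-up family is already non-reduced along $E_{p_0}$). So both the inequality $\widetilde{C_0}\cdot E_{p_0}\geq m$ and its identification with $\length_{p_0}(C_R\cap T)$ are unjustified as written. I recommend following the paper's route: base change, resolve $xy=t^k$ by the chain of ruled surfaces, note that the section of singular points meets the central fiber in its smooth locus, and then split into the two cases according to where it lands.
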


\begin{proof}
  Pick any point in $\mathfrak T \cap \HH_t$ representing a curve $C_0=C_R \cup C_P  \subset X$ not containing $T$. Up to making a base change of a suitable order $k\geq 1$, totally ramified at $0$, we may assume there exists a family of curves rationally defined over 
  $\mathbb D$ with general fibre $C_t$ and special fibre $C_0$ (see, e.g., \cite[pp. 39-43]{GH})\footnote{ To be precise, our assumptions yield
    that there is an irreducible relative Cartier divisor $\mathfrak C\to\mathbb D$ on $\mathfrak X$    such that its general fiber $\mathfrak C_t\subset\mathfrak X_t$  is  algebraically equivalent (on $\mathfrak{X}$)  to $kC_t$ for some $k \in \ZZ^+$ and  contains $C_t$ as an irreducible component with multiplicity $1$, and whose special fiber is $\mathfrak C_0=kC_0$. Equivalently, $k$ is the minimum integer such that there exists a reduced and irreducible  $k$-multisection $\gamma\subset\mathfrak T\subset\mathfrak H$ of $h$ so that $\gamma \cap \HH_0= k[C_0]$ and so that $ \gamma\cap \HH_t$ consists of $k$ reduced points, including the point $[C_t]$. 
    The fact that the general fiber is reduced is a consequence of the smoothness of the general fiber of $h$. In the language of \cite[Def. 1.2]{galati} one says that  $C_0$ is a limit curve of $C_t$ with geometric multiplicity $k$. The needed base change has order $k$.}.
 The  resulting total family $\mathfrak{X}^{\prime}$ will be singular along (the strict transform of) $T$. After $k-1$ blowing-ups we obtain a family of surfaces $\mathcal Y\to \mathbb D$ with smooth total space:
\begin{displaymath}
\xymatrix{\mathcal Y\ar@/_/[ddr]\ar[dr]\ar@/^/[drr]^{f_k} & & \\
&  \mathfrak X^\prime \ar[d]\ar[r] &
\mathfrak X\ar[d]\\
& \mathbb D\ar[r]^{\nu_k}&\mathbb D. }
\end{displaymath}
The special fiber of $\mathcal Y$ is the surface with double normal crossing singularities 
\[ \mathcal Y_0=R\cup \mathcal E_{1}\cup\dots\cup \mathcal E_{k-1}\cup P,\]
consisting of a chain of ruled surfaces $\E_1,\ldots, \E_{k-1}$ over $T$, with $R$ and $P$ attached at its ends such that for each $i \in \{0,\ldots,k-1\}$ the intermediate surface $\E_i$ has two sections $T_i$ and $T_{i+1}$ such that
\[ \E_1 \cap R=T_1=T, \;\; \E_i \cap \E_{i+1}=T_{i+1} \; \mbox{for}\; i \in \{0,\ldots,k-2\}, \;\; \E_{k-1} \cap P=T_k=T, \]
as shown in the following picture:

\begin{center}
  \begin{tikzpicture}[scale=0.8]   
    \coordinate (A) at (-10,0);
    \coordinate (A1) at (-9,0.5);
    \coordinate (B) at (-8,-1);
    \coordinate (B1) at (-7,0.5);
    \coordinate (C) at (-6,0);
    \coordinate (C1) at (-5,0.5);
    \coordinate (D) at (-4,-1);
    \coordinate (D1) at (-3,0.5);
		\coordinate (E) at (-2,0);
                \coordinate (E1) at (-1.3,0);
                \coordinate (F1) at (-0.7,0);
                \coordinate (F) at (0,0);
\coordinate (G1) at (1,0.5);
                \coordinate (G) at (2,-1);
\coordinate (H1) at (3,0.5);
\coordinate (H) at (4,0);
\coordinate (I1) at (5,0.5);
\coordinate (I) at (6,-1);
\coordinate (J1) at (7,0.5);
                \coordinate (J) at (8,0);
		\coordinate (K) at (8,2);
		\coordinate (L) at (6,1);
                \coordinate (M) at (4,2);
		\coordinate (N) at (2,1);
                \coordinate (O) at (0,2);
                \coordinate (O1) at (-0.7,2);
                \coordinate (P1) at (-1.3,2);
		\coordinate (P) at (-2,2);
                \coordinate (Q) at (-4,1);
		\coordinate (R) at (-6,2);
                \coordinate (S) at (-8,1);
		\coordinate (T) at (-10,2);
                \draw  (A)--(B)--(C)--(D)--(E)--(P)--(Q)--(R)--(S)--(T);
                \draw[thick,dotted]  (E)--(E1);
                \draw[thick,dotted]  (P)--(P1);
                \draw[thick,dotted]  (F)--(F1);
                \draw[thick,dotted]  (O)--(O1);
                
\draw  (A)--(T);
\draw[thick,blue] (B) node[below] {$T_1$}--(S);
\draw[thick,blue] (C) node[below] {$T_2$}--(R);
\draw[thick,blue] (D) node[below] {$T_3$}--(Q);
\draw[thick,blue] (E) node[below] {$T_4$}--(P);
\draw  (F)--(G)--(H)--(I)--(J)--(K)--(L)--(M)--(N)--(O);
\draw[thick,blue]  (F) node[below] {$T_{k-3}$}--(O);
\draw[thick,blue] (G) node[below] {$T_{k-2}$}--(N);
\draw[thick,blue] (H) node[below=0.1cm] {$T_{k-1}$}--(M);
\draw[thick,blue] (I) node[below] {$T_{k}$}--(L);
\draw (J)--(K);
\draw (A1)  node{$R$};
\draw (B1)  node{$\E_1$};
\draw (C1)  node{$\E_2$};
\draw (D1)  node{$\E_3$};
\draw (G1)  node{$\E_{k-3}$};
\draw (H1)  node{$\E_{k-2}$};
\draw (I1)  node{$\E_{k-1}$};
\draw (J1)  node{$P$};
\end{tikzpicture}
\end{center}
The induced morphism $f_k:\mathcal Y\to\mathfrak X$ contracts $\mathcal E_1\cup \cdots\cup\mathcal E_{k-1}$ onto $T$ and it is a $k:1$ cover of $\mathfrak X$, totally ramified along $R$ and $P$.

We now have an irreducible relative Cartier divisor $\mathfrak C\to\mathbb D$ on $\Y$    such that its general fiber is $\mathfrak C_t=C_t\subset\mathfrak X_t$  and its special fiber is a curve $\mathfrak{C}_0$ such that $f_k(\mathfrak{C}_0)=C_0$, $\mathfrak{C}_0 \cap R=C_R$, $\mathfrak{C}_0 \cap P=C_P$ and $\mathfrak{C}_0 \cap \E_i$ consists of a union of fibers, for every $i=1,..., k-1,$ in such a way that one obtains  chains of fibers in $\mathcal E_{1}\cup\dots\cup \mathcal E_{k-1}$, counted with the right multiplicity, joining the divisor $C_R\cap T_1$ with the divisor $C_P\cap T_{k}$.  

The locus of singular points $p_t$ now form a section $\gamma \subset \mathfrak C$. In particular $p_0^{\prime}:=\gamma \cap \Y_0$ must lie in the smooth locus of $\Y_0$. Being a limit of the $p_t$, which are points of multiplicity $m$ on $C_t$, the point $p_0^{\prime}$ must be a point of multiplicity at least $m$ on $\mathfrak{C}_0$. If $p_0^{\prime}$ lies on $R$ or on $P$, then it is a point of multiplicity $\geq m$ on $C_R$ or on $C_P$, whence $p_0:=f_k(p_0^{\prime})$ lies off $T$ and is 
a point of multiplicity $\geq m$ on $C_0$. This yields case (i).  If
$p_0^{\prime}$ lies on one of the $\E_i$, then $\mathfrak{C}_0 \cap \E_i$ contains a fiber with multiplicity $\geq m$. The corresponding  chain of fibers intersects $T_1$ and $T_k$ at points $q_P$ and $q_R$, respectively, with multiplicity $\geq m$, whence $C_R \cap T_1$ contains $q_R$ and $C_P \cap T_k$ contains $q_P$
both with multiplicity $\geq m$. Hence, $C_R \cap T=C_P \cap T$ contains the point $p_0:=f_k(q_P)=f_k(q_R)=f(k)(p_0^{\prime})$ with multiplicity $\geq m$. This yields case (ii). 
\end{proof}

\begin{remark} \label{rem:piugeneral}
  The  latter lemma holds in a more general setting of a flat family $f:\mathfrak{X} \to \DD$ over the unit disc such that $\mathfrak{X}$ is smooth with special fiber $\mathfrak{X}_0$ a normal crossing intersection of two smooth surfaces and general fiber an arbitrary smooth surface, as long as  some $[C_t]$ belongs to the smooth locus of  the irreducible component $\H$ of the relative Hilbert scheme that we are considering  (see the footnote on the previous page).
\end{remark}

\section{Rational curves on Enriques surfaces} \label{sec:RC}

In this section we will prove Theorem \ref{mainthm1}.

It is well-known that a general Enriques surface does not contain smooth rational curves, equivalently, $(-2)$-curves (cf. references in \cite[p. 577]{cos2}).
In particular, all curves on a general Enriques surface have non-negative self-intersection, whence all effective divisors of positive self-intersection are ample. 

It is also well-known that for an irreducible curve $C$ on an Enriques surface $S$ with $C^2=0$, one either has
\begin{itemize}
\item $\dim |C|=1$ and $|C|$ is an elliptic pencil, containing precisely two nonreduced fibers, $2E$ and $2E'$, with $E' \sim E+K_S$, and a finite number of singular fibers, which on a general $S$ are $12$ irreducible nodal rational curves;
\item $\dim |C|=0$, in which case $|2C|$ is an elliptic pencil as in the previous case, and $C$ is {\it primitive}, that is, indivisible in $\Num S$.
\end{itemize}

To prove Theorem \ref{mainthm1} we will therefore treat the two cases of ample divisors with positive self-intersection and primitive divisors of self-intersection zero.

\begin{prop} \label{prop:1}
Let $(S,L)$ be a general member of a component of the moduli space of polarized Enriques surfaces. If $L$ is not $2$-divisible in $\Num(S)$, then $|L|$ contains no rational curves.
\end{prop}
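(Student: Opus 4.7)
The plan is to argue by contradiction via the degeneration of Theorem \ref{thm:deform}. Suppose that for the general polarized Enriques surface $(S,L)$ in some component of the moduli space, $L$ is not $2$-divisible in $\Num(S)$ yet $|L|$ contains an irreducible rational curve $C_t$. By Theorem \ref{thm:mainmod} the component is represented by a family $\mathfrak X\to\DD$ with central fibre $(X,\L)=(R\cup_T P,(L_R,L_P))$. Up to a finite base change, the curve $C_t$ flatly specializes to a curve $C_0$ lying in one of the limit linear systems of $|\L|_{\mathfrak X_t}|$; so $C_0=C_R\cup C_P$ with $C_R\in|L_R(aT)|$, $C_P\in|L_P(-aT)|$ for some $a\in\ZZ$ and $C_R\cap T=C_P\cap T$ as effective divisors on $T$.

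Passing to the stable reduction of the normalization $\PP^1\cong\widetilde C_t\to C_t\subset\mathfrak X_t$ produces a stable map from a connected semistable curve of arithmetic genus zero (hence from a tree of $\PP^1$'s) onto $C_0$; therefore every irreducible component of $C_0$ is rational. Since $R=\Sym^2(E)$ is a $\PP^1$-bundle over the elliptic curve $E$, its only irreducible rational curves are the fibres $\f_e$ of $\pi$, so $C_R=\sum_i b_i\f_{e_i}$ for some points $e_i\in E$ and positive integers $b_i$, and
\[
 C_R|_T \;=\; (\pi|_T)^{*}\Bigl(\sum_i b_i\, e_i\Bigr)
\]
is the pullback of an effective divisor of degree $b:=\sum_i b_i$ on $E$.

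The core of the argument is then to exploit the matching $C_P|_T=C_R|_T$. Combined with the $d$-semistability relation $\N_{T/R}\otimes\N_{T/P}\cong\O_T$ and the classification of irreducible rational curves on the rational elliptic surface $P$ (the blow-up of $\PP^2$ at the nine points of the cubic $T$), the requirement that $C_P|_T$ be a sum of $(\pi|_T)$-fibres should force the rational components of $C_P$ to pair up so that each pair, together with one of the fibres $\f_{e_i}$ on $R$, forms an isotropic Cartier divisor $(\f_{\alpha(A)},A)\in\Pic(X)$ of Example \ref{ex:iso}, with $A$ of the form $\ell-\e_i$ or $2\ell-\e_i-\e_j-\e_k-\e_l$. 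This would express $[C_0]=\L+a\xi$ in $\Pic(X)$ as twice a Cartier class plus possibly $K_X$; applying $\iota_t^{*}$ and using the exact sequence \eqref{eq:piclimite} would then yield $L+\varepsilon_L K_S\sim 2M$ on $S$ for some line bundle $M$, contradicting the hypothesis that $L$ is not $2$-divisible in $\Num(S)$.

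The principal obstacle is the combinatorial step on the $P$-side: to show that, granted the matching along $T$ and the assumption that $L$ is not $2$-divisible, no configuration of the rational components of $C_P$ is possible other than the expected pairing into isotropic pencils of the two types listed above. This will require a careful case analysis of the $(-1)$-curves and the low-genus linear pencils on $P$, together with a detailed description of their intersection behaviour with the cubic $T$. Once this pairing is in place, the $2$-divisibility of $L$ modulo $K_S$ follows at once.
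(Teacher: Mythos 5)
Your setup is the same as the paper's (degenerate to $X=R\cup_T P$ via Theorem \ref{thm:deform}, note that every component of the limit curve must be rational, and that the only irreducible rational curves on $R=\Sym^2(E)$ are fibres of $\pi$, so $C_R$ is a union of fibres). But at exactly the point where the proof should close, you abandon the numerical information you have just extracted and launch into an unproven combinatorial claim. The decisive ingredient you are missing is the result from \cite[\S 7]{cdgk}: when $L$ is not $2$-divisible in $\Num(S)$, the limit line bundle $\L$ can be chosen so that $\L|_R\cdot\f$ is \emph{odd}. Since $T\cdot\f=2$, twisting by $aT$ preserves this parity, so $C_R\cdot\f$ is odd for every limit linear system, in particular nonzero. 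This directly contradicts your own observation that $C_R=\sum_i b_i\f_{e_i}$ (which forces $C_R\cdot\f=0$), and the proof is finished in one line. Without some such parity input tied to the non-$2$-divisibility hypothesis, nothing in your argument ever uses that hypothesis in a checkable way.

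The route you propose instead --- forcing the rational components of $C_P$ to ``pair up'' with fibres on $R$ into the isotropic Cartier divisors of Example \ref{ex:iso}, and thereby exhibiting $[C_0]$ as twice a Cartier class --- is a genuine gap, not just an omitted computation. The matching condition along $T$ constrains only the \emph{total} divisor $C_P\cap T=C_R\cap T$, not the individual components of $C_P$; and $P$, being a rational surface, carries irreducible rational curves in essentially every effective class with enough sections, so there is no structural reason for the components of $C_P$ to assemble into the two special types of pencils $\ell-\e_i$ and $2\ell-\e_i-\e_j-\e_k-\e_l$. You yourself flag this step as ``the principal obstacle,'' and I do not see how to carry it out; in any case it is unnecessary once the parity of $\L|_R\cdot\f$ is in hand.
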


\begin{proof}
 Let $X$ be as in \S \ref{sec:fl}. By Theorem \ref{thm:deform} there is a flat family $f: \XX \to \DD$ and a line bundle $\L$ on $\XX$ such that
  \begin{itemize}
  \item $S$ is a general fiber of $f$ and $\L|_S=L$;
  \item $f^{-1}(0)=X$.
      \end{itemize}
  Moreover, it is proved in \cite[\S 7]{cdgk} that under the hypothesis that $L$ is not $2$-divisible in $\Num(S)$, one can make sure that $\L|_{R} \cdot \f$ is odd. It follows that  $\L|_{R}(aT) \cdot \f$ is odd for any $a \in \ZZ$. Therefore, all curves in any linear system $|\L|_{R}(aT)|$ dominate $E$ under the projection $\pi:R \to E$. 
  Therefore, they contain some component with positive geometric genus. Thus, the limit linear systems of $|L|$ as $S$ specializes to $\mathfrak{X}_0$ contain no curves whose components are all rational. Hence, $|L|$ cannot contain rational curves. 
  \end{proof}

\begin{prop} \label{prop:2}
Let $S$ be a general Enriques surface. Then any primitive effective isotropic divisor is smooth and irreducible.
\end{prop}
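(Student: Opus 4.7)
The plan is to prove irreducibility and smoothness of $E$ separately: the first by intersection-theoretic arguments exploiting that a general Enriques surface is unnodal, the second via the degeneration of \S\ref{sec:fl}.

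For irreducibility, I would decompose $E=\sum_{i=1}^r n_i C_i$ into distinct irreducible components with $n_i\geq 1$. Since $S$ contains no $(-2)$-curves, $C_i^2\geq 0$; combined with $C_i\cdot C_j\geq 0$ for $i\neq j$, the identity
\[0=E^2 = \sum_i n_i^2 C_i^2 + 2\sum_{i<j} n_i n_j\, C_i\cdot C_j\]
forces $C_i^2=0$ for all $i$ and $C_i\cdot C_j=0$ for $i\neq j$. By the Hodge index theorem the classes $[C_i]$ all lie on a common isotropic ray in $\Num(S)$, and since every irreducible curve of self-intersection zero on an Enriques surface is either a fiber or a half-fiber of some elliptic pencil, one has $[C_i]\equiv k_iF_0$ in $\Num(S)$ with $k_i\in\{1,2\}$, for a common primitive isotropic class $F_0$. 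Then $E\equiv(\sum_i n_ik_i)F_0$, and primitivity of $E$ forces $\sum n_ik_i=1$, whence $r=n_1=k_1=1$ and $E=C_1$ is an irreducible primitive half-fiber of the elliptic pencil $|2E|$.

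For smoothness, since $K_S$ is numerically trivial, adjunction gives $p_a(E)=1$, and if $E$ were singular it would be an irreducible rational curve with a single node or cusp. I would rule this out by applying the degeneration $f:\XX\to\DD$ of Theorem \ref{thm:deform}: after a suitable choice, $[E]$ lifts to a Cartier divisor class on $X=R\cup_T P$ of the form $(\f,A)$ from Example \ref{ex:iso}, where $A$ is one of the specified rational pencil classes $\ell-\e_i$ or $2\ell-\e_i-\e_j-\e_k-\e_l$ on $P$. The unique member of $|(\f,A)|$ is a connected nodal curve obtained by gluing a smooth fiber $\f_{\alpha(A)}$ of $\pi\colon R\to E$ to a smooth rational curve $A\subset P$ transversally at the two points of $A\cap T$. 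Since $\XX$ is smooth and $|\L|_{\XX_t}|$ is zero-dimensional (as $[E]$ is primitive isotropic), the general-fiber curve $E\subset\XX_t$ is obtained by smoothing this limit, and the smoothness of $\XX$ forces simultaneous smoothing of both nodes, producing a smooth connected curve of arithmetic genus one on $\XX_t$, which must be $E$.

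The main obstacle is the assertion that every primitive isotropic class $[E]$ on a general Enriques surface admits a lift of the required form $(\f,A)$ to $X$. This amounts to showing that the Weyl-group orbit of any such class (acting by isometries on the Enriques lattice) contains a representative of this tractable form; this can be done by combining the transitivity of the Weyl group on primitive isotropic vectors with the realization of Weyl elements as monodromy in the moduli space, which is available on any component of unpolarized Enriques moduli for general $S$.
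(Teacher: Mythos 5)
Your irreducibility argument is correct and is genuinely different from (and more elementary than) the paper's: decomposing $E=\sum n_iC_i$, using the absence of $(-2)$-curves, the Hodge index theorem, and the fiber/half-fiber dichotomy to force $E$ to be a single primitive half-fiber is a clean self-contained reduction that the paper does not make explicit (there, irreducibility and smoothness come out together from the degeneration). Your smoothness argument also uses the same degeneration and the same isotropic classes $(\f,\ell-\e_i)$ from Example \ref{ex:iso}, and the assertion that both nodes on $T$ smooth simultaneously is correct (this is \cite[Cor.~1]{galati}, which the paper cites for exactly this step).

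However, the step you yourself flag as ``the main obstacle'' is a genuine gap, and your proposed resolution does not close it. First, ``transitivity of the Weyl group on primitive isotropic vectors realized as monodromy'' is asserted, not proved, and is not among the tools the paper makes available. Second, and more seriously, even granting it, the argument as structured only yields the statement for a \emph{very general} surface, not a \emph{general} one: on a fixed $S$ there are countably many primitive effective isotropic classes, and your method produces, for each sheet of the (countable, \'etale) space of pairs $(S,E)$, a proper closed locus where smoothness could fail; the union of the images of these loci in the moduli space $\E$ is a priori only a countable union of proper subvarieties. The paper circumvents precisely this by attaching to $E$ the ample genus-$3$ polarization $H=2E+F$ with $\phi(H)=1$: the pairs $(S,H)$ form a single irreducible component $\E_{3,1}$ of the \emph{polarized} moduli space (Theorem \ref{thm:mainmod}), the forgetful map $g:\E_{3,1}\to\E$ is \emph{finite}, and $E$, $E'$ are recovered intrinsically as the only isotropic divisors computing $\phi(H)$; hence one degeneration plus openness plus finiteness of $g$ handles all primitive isotropic classes on a general $S$ at once. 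Without this (or an equivalent finiteness statement, e.g.\ that $\Aut(S)$ acts with finitely many orbits on such classes), your proof establishes a weaker statement than the one claimed, although the weaker ``very general'' version would still suffice for the applications in Theorems \ref{mainthm1} and \ref{mainthm2}.
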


\begin{proof}
  Let $\E_{3,1}$ denote the irreducible component of the moduli space of polarized Enriques surfaces of genus $3$ parametrizing isomorphism classes of pairs $(S,H)$ such that
  $S$ is an Enriques surface and $H \in \Pic S$ is ample with $p_a(H)=3$ (equivalently, $H^2=4$) and $\phi(H)=1$.
  The fact that such pairs form an irreducible component has probably been known to experts for a long time; in the language of Theorem \ref{thm:mainmod} this follows since any such $H$ has a fundamental presentation \eqref{eq:fundpres} of the form $H \sim 2E_1+E_2$ (see, e.g., \cite{cd}, \cite[Lemma 4.18]{cdgk1} or
\cite[Ex. ``Case $\phi_1=1$'', p. 133]{kn-JMPA}). Denoting by $E_1'$ the only member of $|E_1+K_S|$, the only two isotropic divisors computing $\phi(H)$ are $E_1$ and $E'_1$.

  Consider now $X$ as in \S \ref{sec:fl} and the isotropic divisors $E_i^0:=(\f_{\alpha_i},\ell-\e_i)$ for $i \in \{1,2\}$ on $X$, with notation as in Example \ref{ex:iso} above. Then $(E_i^0)':=(\f_{\alpha_i+\eta},\ell-\e_i) \sim E_i^0+K_X$. 
 By Theorems \ref{thm:mainmod} and \ref{thm:deform} the pair 
$(X,2E_1^0+E_2^0)$ deforms, in a one-parameter family, to a general member $[(S,H:=2E_1+E_2)] \in \E_{3,1}$, in such a way that $E^0_1$ and $(E^0_1)'$ deform to $E_1$ and $E'_1$, respectively. Since $E^0_1$ and $(E^0_1)'$ are a transversal union of two smooth curves intersecting only along $T$, the two nodes must smooth (cf., e.g., \cite[Cor. 1]{galati}), so that $E_1$ and $E_1'$ on $S$ are smooth. This means that the locus $\E^{\circ}_{3,1}$ in $\E_{3,1}$ consisting of pairs $(S,H)$ such that both isotropic divisors computing $\phi(H)=1$ are smooth and irreducible is a nonempty open subset of $\E_{3,1}$. Since the natural forgetful morphism  $g:\E_{3,1} \to \E$ is finite, the set $g(\E_{3,1} \setminus \E^{\circ}_{3,1})$ is a proper subset of $\E$. Hence, for a general Enriques surface $S$, all isotropic divisors $E$ computing $\phi(H)=1$ for some ample $H$ of genus $3$ are smooth and irreducible. For any isotropic divisor $E$ we may find nine other isotropic divisors $F$ such that $E \cdot F=1$ (cf. \cite[Cor. 2.5.6]{cd}). Since $S$ is general, the divisor $2E+F$ is ample, and $E \cdot (2E+F)=\phi(2E+F)=1$. Hence, any effective nontrivial isotropic divisor computes $\phi(H)=1$ for some ample $H$ of genus $3$. The result follows.
\end{proof}

\begin{proof}[Proof of Theorem \ref{mainthm1}]
  If $C^2=0$ and $C$ is not $2$-divisible in $\Num(S)$, then $C$ is a
primitive effective isotropic divisor, whence the result follows from Proposition \ref{prop:2}. 

If $C^2>0$, the result follows from Proposition \ref{prop:1} and the fact that the moduli space of polarized Enriques surfaces of a fixed genus has finitely many components. 
\end{proof}

\section{Seshadri constants on Enriques surfaces} \label{sec:ses}

In this section we will prove Theorem \ref{mainthm2}.

We start by making the following:

\begin{defn}
Let $S$ be an Enriques surface and $L$ an effective line bundle or divisor on $S$ with $L^2 \geq 0$. The {\em length of $L$}, denoted by $l(L)$, is the maximal number $l$ of (possibly non-distinct) effective isotropic divisors $E_1,\ldots,E_l$ such that $L \sim E_1+\cdots +E_l$.
\end{defn}

\begin{lemma} \label{lemma:1}
If $L^2>0$, then $\phi(L) \leq l(L)$.
\end{lemma}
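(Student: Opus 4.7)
The plan is to compare both sides of the inequality to the integer $N := a_0 + a_1 + \cdots + a_7 + a_9 + a_{10}$, where the $a_i$ are the fundamental coefficients of $L$ furnished by Proposition \ref{prop:exdec}, and establish $\phi(L) \leq N \leq l(L)$.

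For the lower bound $l(L) \geq N$, I start from the fundamental presentation
\[ L \sim a_1 E_1 + \cdots + a_7 E_7 + a_9 E_9 + a_{10} E_{10} + a_0 E_{9,10} + \varepsilon_L K_S. \]
If $\varepsilon_L = 0$, this already exhibits $L$ as a sum of $N$ effective isotropic divisors, so $l(L) \geq N$ directly. If $\varepsilon_L = 1$, the assumption $L^2 > 0$ forces at least one of $a_1,\dots,a_7,a_9,a_{10},a_0$ to be positive; picking such an index and replacing one copy of the corresponding effective isotropic divisor $F$ by $F + K_S$ absorbs the $K_S$-term. The class $F + K_S$ is still isotropic, since $(F + K_S)^2 = F^2 + 2F \cdot K_S + K_S^2 = 0$, and still effective by the standard fact that on an Enriques surface every effective isotropic class has a partner differing by $K_S$ (e.g.\ the two half-pencils of an elliptic pencil, together with the observation that multiples decompose as sums). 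This yields a decomposition of $L$ into $N$ effective isotropic summands, so $l(L) \geq N$.

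For the upper bound $\phi(L) \leq N$, I use the intersection numbers built into the fundamental presentation: $E_i \cdot E_j = 1$ for $i \neq j$ in $\{1,\ldots,10\}$, $E_1 \cdot E_{9,10} = 1$, and $K_S$ is numerically trivial. A direct computation gives
\[ E_1 \cdot L \;=\; a_2 + a_3 + \cdots + a_7 + a_9 + a_{10} + a_0 \;=\; N - a_1 \;\leq\; N. \]
Since $E_1$ is a nontrivial effective isotropic divisor, the definition of $\phi(L)$ yields $\phi(L) \leq E_1 \cdot L \leq N$. Combined with $N \leq l(L)$, this gives $\phi(L) \leq l(L)$.

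No step here presents a serious obstacle; the argument is essentially a bookkeeping exercise built on Proposition \ref{prop:exdec}, and the only point requiring a brief justification is the effectivity of $F + K_S$ in the $\varepsilon_L = 1$ case, which is a standard fact about Enriques surfaces.
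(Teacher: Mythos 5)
Your proof is correct and follows essentially the same route as the paper: bound $\phi(L)$ by intersecting $L$ with one of the isotropic divisors from the fundamental presentation, and read off $l(L)\geq N$ from the presentation itself (the paper intersects with $E_8$, which does not appear in the presentation and so gives exactly $N$, whereas your choice of $E_1$ gives the equally sufficient $N-a_1$). Your extra care in absorbing the $\varepsilon_L K_S$ term via the effectivity of $F+K_S$ is a detail the paper leaves implicit, and it is handled correctly.
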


\begin{proof}
In the notation of Proposition \ref{prop:exdec}, we have
  \[ \phi(L) \leq E_8 \cdot L =a_1+\cdots+ a_7+a_9+a_{10}+a_0 \leq l(L).\]
\end{proof}

\begin{lemma} \label{lemma:2}
  We have $L^2 \leq l(L)^2+l(L)-2$. If $L^2>0$ and equality occurs, then $\phi(L)=l(L)$.
\end{lemma}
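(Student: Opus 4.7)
My plan is to use Proposition~\ref{prop:exdec} to put $L$ into its fundamental presentation and then exploit a Cremona-like identity among the basic isotropic divisors to produce a sufficiently long effective decomposition of $L$.

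First, I would write
\[
L \sim a_1 E_1 + \cdots + a_7 E_7 + a_9 E_9 + a_{10} E_{10} + a_0 E_{9,10} + \varepsilon_L K_S,
\]
with $(E_i)_{i\in\{1,\ldots,10\}}$ an isotropic $10$-sequence ($E_i \cdot E_j = 1$ for $i \neq j$) and $E_{9,10}$ as in Proposition~\ref{prop:exdec}. Setting $N := a_1 + \cdots + a_7 + a_9 + a_{10} + a_0$ and absorbing $\varepsilon_L K_S$ into one summand (using that $|E_i+K_S|$ contains an effective isotropic divisor), this exhibits $L$ as a sum of $N$ effective isotropic divisors, so $l(L) \geq N$. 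A direct intersection computation using $E_{9,10} \cdot E_i = 1$ for $i \in\{1,\ldots,8\}$ and $E_{9,10} \cdot E_i = 2$ for $i \in \{9,10\}$ (both deduced from $E_{9,10} \sim \tfrac{1}{3}(E_1 + \cdots + E_{10}) - E_9 - E_{10}$) gives
\[
L^2 = N^2 - a_0^2 - \sum_{i=1}^7 a_i^2 - a_9^2 - a_{10}^2 + 2 a_0(a_9 + a_{10}).
\]

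The heart of the argument is the identity $E_1 + \cdots + E_{10} \sim 3(E_9 + E_{10} + E_{9,10})$, immediate from the formula for $E_{9,10}$. Setting $q := \min\bigl(\lfloor a_9/2\rfloor, \lfloor a_{10}/2\rfloor, \lfloor a_0/3\rfloor\bigr)$, I substitute $q$ copies of the right-hand side by $q$ copies of the left-hand side in the fundamental presentation, obtaining the effective expression
\[
L \sim \sum_{i=1}^7 (a_i + q) E_i + q E_8 + (a_9 - 2q) E_9 + (a_{10} - 2q) E_{10} + (a_0 - 3q) E_{9,10} + \varepsilon_L K_S,
\]
which decomposes $L$ into $N + q$ effective isotropic divisors, so $l(L) \geq N + q$.

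It then remains to verify $L^2 \leq (N+q)(N+q+1) - 2$ by direct manipulation; after substitution, this reduces to the nonnegativity of
\[
\sum_{i=1}^7 a_i^2 - 2 a_9 a_{10} + (a_9 + a_{10} - a_0)^2 + (2q+1)N + q(q+1) - 2,
\]
which follows from $a_9 + a_{10} \geq a_0$ and the definition of $q$ (the case $q = 0$ being the tight one, the cases $q\geq 1$ having ample slack coming from the term $(2q+1)N$). In the equality case $L^2 = l(L)(l(L)+1)-2$ with $L^2 > 0$, tracing equality through the above pins down the shape of the fundamental coefficients to a short list of configurations (e.g., $a_0 = a_9 = 1$ with all other $a_i$ zero, or $a_0 = a_9 = a_{10} = 1$, etc.); a direct computation of $\phi(L) = \min_{F}L \cdot F$ over the effective isotropic $F$ appearing in the presentation then confirms $\phi(L) = l(L)$.

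The main obstacle I expect is ensuring that this single rewriting via the pair $(9,10)$ truly suffices to give a large enough lower bound for $l(L)$ in every case. In principle there are analogous identities $E_1 + \cdots + E_{10} \sim 3(E_i + E_j + E_{i,j})$ for any pair $(i,j)$, and in borderline configurations one may need to invoke these too; however, the canonical ordering $a_1 \geq \cdots \geq a_7$ and $a_0 \geq a_9 \geq a_{10}$ concentrates the ``excess'' precisely in the $(9,10)$-slot, which is why I expect the above single move to be enough.
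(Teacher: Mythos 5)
Your route is genuinely different from the paper's, but as written it has two real gaps. First, the entire argument hinges on the inequality $L^2 \leq (N+q)^2+(N+q)-2$, i.e.\ on the nonnegativity of the quantity $\sum_{i=1}^7 a_i^2+(a_9+a_{10}-a_0)^2-2a_9a_{10}+N(2q+1)+q(q+1)-2$, and you do not prove it — you assert it ``follows from $a_9+a_{10}\geq a_0$ and the definition of $q$'' and then, in your closing paragraph, concede you are not sure the single $(9,10)$-rewriting suffices. The term $-2a_9a_{10}$ genuinely competes with $N(2q+1)$ (e.g.\ when $a_0=a_9=a_{10}$ are large, $q=\lfloor a_0/3\rfloor$ and the two terms are of the same order $2a_9a_{10}$), so a careful case analysis on $a_{10}\in\{0,1\}$ versus $a_{10}\geq 2$, and on which of $\lfloor a_{10}/2\rfloor,\lfloor a_0/3\rfloor$ computes $q$, is unavoidable; it does work out, but it is the actual content of your proof and is missing. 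Second, and more seriously, your treatment of the equality case is logically flawed: to conclude $\phi(L)=l(L)$ you need the lower bound $\phi(L)\geq l(L)$ (the upper bound is Lemma \ref{lemma:1}), but ``computing $\min_F L\cdot F$ over the effective isotropic $F$ appearing in the presentation'' only yields upper bounds for $\phi(L)$, since $\phi$ is an infimum over \emph{all} effective isotropic divisors. For instance, for $L\equiv E_9+E_{9,10}$ you must exclude the existence of some other isotropic $F$ with $L\cdot F=1$, which requires an extra argument (e.g.\ Hodge index) and not just intersection numbers with $E_1,\ldots,E_{10},E_{9,10}$. You also need to dispose separately of the case $L^2=0$, where Proposition \ref{prop:exdec} does not apply (this is trivial, but should be said).

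For comparison, the paper avoids all of this with a short induction: pick an isotropic $E$ computing $\phi(L)$, note $(L-E)^2=L^2-2\phi(L)\geq 0$ by $\phi(L)^2\leq L^2$ and parity, and $l(L-E)\leq l(L)-1$; the inductive hypothesis plus $\phi(L)\leq l(L)$ gives the inequality in two lines, and the equality case of $\phi(L)=l(L)$ drops out immediately because $2\phi(L)\leq 2l(L)$ is the only inequality that can be slack. Your explicit computation buys a classification of the borderline lattice classes (which the paper instead imports from \cite{KL} in Proposition \ref{prop:sessub}), but at the cost of a substantially longer verification and, in its current form, an incomplete one.
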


\begin{proof}
  We first prove the inequality $L^2 \leq l(L)^2+l(L)-2$ by induction. The base case is the case $L^2=0$, where it clearly holds.

  If $L^2>0$, pick an isotropic $E$ such that $E \cdot L=\phi(L)$. Since $\phi(L)^2 \leq L^2$ by \cite[Cor. 2.7.1]{cd}, we have $(L-E)^2 = L^2-2\phi(L) \geq \phi(L)^2-2\phi(L)$.
Thus,  $(L-E)^2 \geq 0$. Moreover, one obviously has
$l(L-E) \leq l(L)-1$.
Thus, by induction, we have
  \begin{eqnarray*}
    L^2-2\phi(L) & = & (L-E)^2 \leq l(L-E)^2+l(L-E)-2
    =l(L-E)\left(l(L-E)+1\right)-2
    \\
    & \leq &\left(l(L)-1\right)l(L)-2=l(L)^2-l(L)-2.
  \end{eqnarray*}
  Using the fact that $\phi(L) \leq l(L)$ by Lemma \ref{lemma:1}, we get
  \begin{equation} \label{eq:2.1}
    L^2 \leq l(L)^2-l(L)-2+2\phi(L) \leq l(L)^2-l(L)-2+2l(L) = l(L)^2+l(L)-2,
  \end{equation}
  as desired. 

  Assume now that $L^2>0$ and $L^2 = l(L)^2+l(L)-2$. From \eqref{eq:2.1} we see that we must have $\phi(L)=l(L)$.
\end{proof}

We now give the main ingredient, besides Theorem \ref{mainthm1},  in the proof of Theorem \ref{mainthm2}:

\begin{prop} \label{prop:sessub}
  Let $H$ be a line bundle on an Enriques surface with $H^2>0$. 
  If $C \subset S$ is an irreducible curve with a point of multiplicity $m$ such that $\frac{ C \cdot H}{m} < \phi(H)$, then $C$ is rational and $C^2=m(m-1)-2$.
 Moreover, if $C^2>0$, then $\phi(C)=l(C)=m-1$ and $C$ is of one of the following types
(in the notation of Proposition \ref{prop:exdec}):
  \begin{itemize}
  \item[(i)]  $C \equiv E_9+E_{9,10}$,
  \item[(ii)] $C \equiv 2(E_9+E_{10}+E_{9,10})$,
  \item[(iii)] $C \equiv h(E_9+E_{9,10})+E_{10}$, $\phi(C)=2h+1$,
  \item[(iv)]  $C \equiv (h+1)E_{9,10}+hE_9+E_{10}$, $\phi(C)=2h+2$.
  \end{itemize}
\end{prop}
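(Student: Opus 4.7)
My plan is to pin down $C^{2}$ by squeezing it between two bounds whose common value is $m(m-1)-2$. For the upper bound, assume first $C^{2}\geq 0$ and write $C\equiv F_{1}+\cdots+F_{l(C)}$ with the $F_{i}$ effective (nontrivial) isotropic divisors. Then
\[
C\cdot H=\sum_{i}F_{i}\cdot H\geq l(C)\,\phi(H),
\]
so the hypothesis $C\cdot H<m\,\phi(H)$ forces $l(C)\leq m-1$, and Lemma \ref{lemma:2} yields $C^{2}\leq l(C)^{2}+l(C)-2\leq m(m-1)-2$. If instead $C^{2}<0$, then $C$ is a $(-2)$-curve, hence smooth rational with $m=1$ and $C^{2}=-2=m(m-1)-2$. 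For the lower bound, the $m$-fold singular point contributes at least $\binom{m}{2}$ to the delta invariant of $C$, giving
\[
C^{2}=2p_{a}(C)-2\geq 2g(C)+m(m-1)-2.
\]
Combining, $C^{2}=m(m-1)-2$, $g(C)=0$ (so $C$ is rational), and $l(C)=m-1$ whenever $C^{2}\geq 0$.

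\textbf{Classification when $C^{2}>0$.} The equality case of Lemma \ref{lemma:2} now gives $\phi(C)=l(C)=m-1$. I apply Proposition \ref{prop:exdec} to $C$ and set $n:=a_{1}+\cdots+a_{7}+a_{9}+a_{10}+a_{0}$; since the fundamental representation yields $l(C)\geq n$ and Lemma \ref{lemma:1} yields $\phi(C)\leq n$, the chain $\phi(C)=l(C)=m-1$ forces the \emph{sharp} equality $\phi(C)=n=m-1$. A direct computation from $E_{i}\cdot E_{j}=1$ for distinct $i,j\leq 10$ and $E_{k}\cdot E_{9,10}=1$ for $k\leq 7$ yields $E_{k}\cdot C=n-a_{k}$; combined with $\phi(C)\leq E_{k}\cdot C$ this forces $a_{k}=0$ for each $k\leq 7$. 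So $C\equiv a_{9}E_{9}+a_{10}E_{10}+a_{0}E_{9,10}$; using additionally $E_{9,10}\cdot E_{9}=E_{9,10}\cdot E_{10}=2$, the constraint $C^{2}=n^{2}+n-2$ reduces to
\[
\delta(\delta-1)=2(a_{9}-1)(a_{10}-1),\qquad \delta:=a_{9}+a_{10}-a_{0},
\]
subject to $0\leq\delta\leq a_{10}\leq a_{9}$.

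\textbf{Enumeration and main obstacle.} A short case analysis on $a_{10}$ finishes the proof: $a_{10}=0$ gives $(a_{9},a_{10},a_{0})=(1,0,1)$, which is case (i); $a_{10}=1$ gives the two families $(h,1,h)$ and $(h,1,h+1)$ for $h\geq 1$, which are cases (iii) and (iv); $a_{10}=2$ gives the single solution $(2,2,2)$, which is case (ii); and for $a_{10}\geq 3$ the inequality $(a_{9}-1)(a_{10}-1)\geq (a_{10}-1)^{2}$ combined with $\delta\leq a_{10}$ forces $a_{9}\leq a_{10}/2+1$, contradicting $a_{9}\geq a_{10}\geq 3$. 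The subtlest step of the argument is the reduction $a_{k}=0$ for $k\leq 7$: it relies crucially on $\phi(C)=n$ being a \emph{sharp} equality rather than the a priori upper bound $\phi(C)\leq n$, and this sharpness is obtained by combining the length bound $l(C)\leq m-1$ from the first paragraph with Lemmas \ref{lemma:1} and \ref{lemma:2}.
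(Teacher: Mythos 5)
Your proof is correct. The first half (rationality and $C^2=m(m-1)-2$) is essentially the paper's argument, just organized as a two-sided squeeze: the paper likewise uses the genus/multiplicity inequality $C^2\geq m(m-1)-2+2p_g(C)$ together with $C\cdot H\geq l(C)\phi(H)$ and Lemma \ref{lemma:2} to force $l(C)=m-1$ and equality throughout. Where you genuinely depart from the paper is the classification when $C^2>0$: the paper simply invokes \cite[Prop.~1.4]{KL} for the list of divisors with $C^2=\phi(C)(\phi(C)+1)-2$, whereas you reprove that classification from scratch using the fundamental presentation of Proposition \ref{prop:exdec}. Your key steps there are sound: the chain $\phi(C)\leq E_8\cdot C=n\leq l(C)$ collapses to equalities, the intersection numbers $E_k\cdot C=n-a_k$ kill $a_1,\dots,a_7$, and I have checked that $C^2=n^2+n-2$ with $E_9\cdot E_{10}=1$, $E_9\cdot E_{9,10}=E_{10}\cdot E_{9,10}=2$ does reduce to $\delta(\delta-1)=2(a_9-1)(a_{10}-1)$ with $0\leq\delta\leq a_{10}\leq a_9\leq a_0\leq a_9+a_{10}$, whose solutions are exactly (i)--(iv). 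What your route buys is self-containedness (the quoted result of Knutsen--Lopez is re-derived in half a page of elementary Diophantine analysis); what it costs is that you are silently re-establishing a cited result, and one should note the small implicit point (also used by the paper in Lemma \ref{lemma:1}) that $l(C)\geq n$ requires absorbing the possible $K_S$-summand of the fundamental representation into one of the effective isotropic divisors, which is harmless since $E+K_S$ is again effective isotropic by Riemann--Roch.
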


\begin{proof}
  If $C$ satisfies the given hypotheses, then
 \[ \frac{1}{2}C^2+1=p_a(C) \geq \frac{1}{2}m(m-1)+p_g(C),\]
 whence
 \begin{equation} \label{eq:sessub1}
   C^2 \geq m(m-1)-2+2p_g(C).
 \end{equation}

If $C^2<0$, then we must have $C^2=-2$, $p_g(C)=0$ and $m=1$. 

If $C^2=0$, then $m>1$, by definition of $\phi(H)$ and the assumption that $\frac{C \cdot H}{m} < \phi(H)$. Hence, \eqref{eq:sessub1} implies that $p_g(C)=0$ and $m=2$.  

If $C^2>0$, Lemma \ref{lemma:2} and \eqref{eq:sessub1} yield that $l(C) \geq m-1$.
If $l(C) \geq  m$, then 
\[  \frac{C \cdot H}{m} \geq \frac{l(C)\phi(H)}{m} \geq \frac{ m \phi(H)}{m} \geq \phi(H),\]
a contradiction.
If $l(C) = m-1$, then
$C^2 = m(m-1)-2$, $p_g(C)=0$ and $\phi(C)=l(C)=m-1$. Thus, $C^2=\phi(C)(\phi(C)+1)-2$, whence  \cite[Prop. 1.4]{KL} implies that $C$ is as in one of the cases (i)-(iv).
\end{proof}

We deduce a few consequences. The first is an improvement of \cite[Thm. 3.4]{Sz}:

\begin{corollary} \label{cor:mainthm2_2}
  Let $S$ be an Enriques surface and $H$ a big and nef line bundle on $S$. For any $x \in S$, set $d(H,x):=\min\{R \cdot H\; |\; \mbox{$R$ is a $(-2)$-curve on $S$ passing through $x$}\}$. For all $x \in S$ we have
  \[ \varepsilon(H,x) \geq \min\{d(H,x),\frac{1}{2}\phi(H)\}.\]
\end{corollary}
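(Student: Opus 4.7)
The plan is to bound $\frac{C\cdot H}{\mult_x C}$ from below for every irreducible curve $C$ through $x$, since by definition $\varepsilon(H,x)$ is the infimum of this ratio. Fix such a $C$ and write $m:=\mult_x C$. If $\frac{C\cdot H}{m}\geq\phi(H)$, there is nothing to prove since $\phi(H)\geq\phi(H)/2$. So assume $\frac{C\cdot H}{m}<\phi(H)$; by Proposition \ref{prop:sessub} the curve $C$ is then rational with $C^2=m(m-1)-2$. From here I would split into three cases according to $m$.

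For $m=1$: the curve $C$ is a smooth rational curve with $C^2=-2$, i.e.\ a $(-2)$-curve passing through $x$, so $C\cdot H\geq d(H,x)$ by definition of $d(H,x)$. For $m=2$: we have $C^2=0$, so $C$ is an irreducible effective nontrivial isotropic divisor, whence $C\cdot H\geq\phi(H)$ by definition of $\phi(H)$, and dividing by $m=2$ gives the bound $\phi(H)/2$.

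For $m\geq 3$: now $C^2=m(m-1)-2>0$, so the Hodge index theorem applies to the big and nef pair and yields
\[(C\cdot H)^2\ \geq\ C^2\cdot H^2\ \geq\ (m(m-1)-2)\,\phi(H)^2,\]
where the second inequality uses the Cossec--Dolgachev bound $H^2\geq\phi(H)^2$ \cite[Cor.~2.7.1]{cd}. The elementary check $m(m-1)-2\geq m^2/4$ (equivalent to $3m^2-4m-8\geq 0$) holds for every $m\geq 3$, so taking square roots gives $C\cdot H\geq\frac{m}{2}\phi(H)$, i.e.\ $\frac{C\cdot H}{m}\geq\phi(H)/2$.

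In all three cases we obtain $\frac{C\cdot H}{m}\geq\min\{d(H,x),\phi(H)/2\}$, and taking the infimum over $C$ finishes the proof. There is no real obstacle: once Proposition \ref{prop:sessub} pins down the shape of the ``problematic'' curves as $C^2=m(m-1)-2$, everything reduces to Hodge index plus the numerical verification $m(m-1)-2\geq m^2/4$ for $m\geq 3$, which is precisely what makes the bound land at exactly $\phi(H)/2$ (rather than something weaker, as in \cite[Thm.~3.4]{Sz}).
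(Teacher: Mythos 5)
Your proof is correct, and the cases $m=1$ and $m=2$ coincide with the paper's argument; the difference lies in how you handle $m\geq 3$. The paper invokes the ``moreover'' part of Proposition \ref{prop:sessub}, namely that $l(C)=m-1$, and then bounds $C\cdot H\geq l(C)\phi(H)=(m-1)\phi(H)$ by pairing $H$ with each isotropic summand in a maximal decomposition of $C$, obtaining $\frac{C\cdot H}{m}\geq\frac{m-1}{m}\phi(H)\geq\frac{2}{3}\phi(H)$. You instead use only the first conclusion $C^2=m(m-1)-2$, combine the Hodge index inequality $(C\cdot H)^2\geq C^2H^2$ (valid since $H^2>0$) with $H^2\geq\phi(H)^2$, and check $m(m-1)-2\geq m^2/4$ for $m\geq 3$. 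Both routes are sound; yours is arguably more self-contained, as it bypasses the length function and the structural classification in Proposition \ref{prop:sessub}, relying only on standard facts. It is worth noting that your inequality actually gives $\frac{C\cdot H}{m}\geq\frac{\sqrt{m(m-1)-2}}{m}\phi(H)$, which for $m\geq 3$ is at least as strong as the paper's $\frac{m-1}{m}\phi(H)$ (since $m(m-1)-2-(m-1)^2=m-3\geq0$), so nothing is lost by the substitution; the stated bound $\frac{1}{2}\phi(H)$ in the corollary is in any case determined by the $m=2$ case, which both arguments treat identically.
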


\begin{proof}
  If $\varepsilon(H,x) \geq  \phi(H)$, there is nothing to prove, so we may assume that $\varepsilon(H,x) < \phi(H)$; in particular,  there exists an irreducible curve $C$ with a point of multiplicity $m$ at $x$ such that $\frac{C \cdot H}{m} < \phi(H)$. 
By Proposition \ref{prop:sessub}, we must have $C^2=m(m-1)-2$.

  If $m=1$, we have $C^2=-2$ and $\frac{C \cdot H}{m}=C \cdot H \geq d(H,x)$. 

  If $m=2$, we have $C^2=0$ and  $\frac{C \cdot H}{m}=\frac{C \cdot H}{2} \geq \frac{\phi(H)}{2}$. 

If $m>2$, then $C^2>0$. By Proposition \ref{prop:sessub}, we have that $l(C)=m-1$. Thus, $\frac{C \cdot H}{m} \geq \frac{l(C)\phi(H)}{m}=\frac{(m-1)\phi(H)}{m} \geq \frac{2}{3}\phi(H)$. This concludes the proof.
  \end{proof}

\begin{corollary} \label{cor:mainthm2_1}
  Let $S$ be an Enriques surface and $H$ a big and nef line bundle on $S$. Except possibly for countably many $x$ or $x$ lying on $(-2)$-curves, we have $\varepsilon(H,x)  \geq \phi(H)$.
\end{corollary}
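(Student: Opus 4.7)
Plan: The plan is to show that the ``bad set''
\[ B(H) := \{x \in S : \varepsilon(H,x) < \phi(H)\} \setminus \{x \in S : x \text{ lies on a } (-2)\text{-curve on } S\} \]
is countable, via a case analysis driven by Proposition \ref{prop:sessub}. First, for any $x\in B(H)$, the infimum defining $\varepsilon(H,x)$ produces an irreducible curve $C\subset S$ with $x\in C$, $\mult_x C=:m\geq 1$, and $\frac{C\cdot H}{m}<\phi(H)$; Proposition \ref{prop:sessub} then forces $C$ to be rational with $C^2=m(m-1)-2$. The subcase $m=1$ gives $C^2=-2$, so $C$ is a $(-2)$-curve and is excluded, leaving only the subcases $m=2$ (with $C^2=0$) and $m\geq 3$ (with $C^2>0$).

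In the subcase $m=2$, the curve $C$ is a singular (nodal or cuspidal) rational fiber or half-fiber of some elliptic pencil $|2E|$ on $S$, and $x$ is its unique multiplicity-$2$ singular point. Since each elliptic pencil has only finitely many singular fibers (bounded via $e(S)=12$) and since $\Pic(S)$ is countable, hence there are only countably many elliptic pencils on $S$, this subcase contributes only countably many points $x$ to $B(H)$.

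In the subcase $m\geq 3$, Proposition \ref{prop:sessub} constrains the numerical class of $C$ to lie in the explicit short list (i)-(iv), yielding at most countably many candidate line bundles $L\in\Pic(S)$. It then suffices to prove that for each such $L$ with $\frac{L\cdot H}{m}<\phi(H)$, the locus
\[ \Sigma_m(L):=\bigl\{x\in S : \exists\,C\in|L| \text{ irreducible with } \mult_x C\geq m\bigr\} \]
is finite.

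I expect this finiteness statement to be the main obstacle. My plan is to study the locally closed incidence subscheme $I\subset|L|\times S$ of pairs $(C,x)$ with $C$ irreducible and $\mult_x C\geq m$. If $\Sigma_m(L)$ were infinite, some irreducible component $I_0\subset I$ would project to a positive-dimensional subvariety of $S$. If the projection $I_0\to|L|$ were constant at some $[C_0]$, the fixed irreducible curve $C_0$ would carry infinitely many multiplicity-$\geq 2$ points, contradicting finiteness of its singular locus. Otherwise $I_0$ maps to a positive-dimensional $V\subset|L|$; every $C\in V$ is then irreducible with a point of some multiplicity $m'\geq m$, and $\frac{C\cdot H}{m'}\leq\frac{L\cdot H}{m}<\phi(H)$, so Proposition \ref{prop:sessub} reapplied forces $C$ to be rational. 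The union $W:=\bigcup_{C\in V}C\subset S$ cannot be $1$-dimensional---otherwise every irreducible $C$ in the positive-dimensional family $V$ would have to coincide with one of the finitely many irreducible components of $W$, forcing $V$ finite---so $W=S$ and the family $\{C\}_{C\in V}$ sweeps out $S$ by rational curves, contradicting the non-uniruledness of Enriques surfaces ($\kappa(S)=0$). This yields finiteness of $\Sigma_m(L)$, and the countable union over all relevant $L$, together with the $m=2$ contribution, shows that $B(H)$ is countable, completing the plan.
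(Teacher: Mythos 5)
Your argument is correct and follows essentially the same route as the paper: apply Proposition \ref{prop:sessub} to force any offending curve to be rational, discard the $m=1$ case as $(-2)$-curves, and conclude countability from the rigidity of rational curves on a non-uniruled surface. The paper compresses your incidence-variety/uniruledness argument (and your separate treatment of $m=2$ via elliptic pencils) into the single observation that a rational curve does not move, as a rational curve, in its linear system, so there are only countably many such curves, each with finitely many singular points.
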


\begin{proof}
  Assume that $\varepsilon(H,x) < \phi(H)$ for a point $x \in S$. Then, by Proposition \ref{prop:sessub}, there is a rational curve $C$ with a point of multiplicity $m$ at $x$, with $m=1$ occurring only when $C$ is a $(-2)$-curve. Since a rational curve does not move (as a rational curve) in its linear system, there are at most countably many such curves, whence also countably many such $x$ of multiplicity $m \geq 2$. 
\end{proof}

\begin{proof}[Proof of Theorem \ref{mainthm2}]
Clearly $\varepsilon(H) \leq \phi(H)$. Assume that equality does not hold. Then by Proposition \ref{prop:sessub} there exists an irreducible rational curve $C$ with a point $x$ of multiplicity $m$ such that $C^2=m(m-1)-2$ and $\frac{C \cdot H}{m} < \phi(H)$. Since a (very) general Enriques surface does not contain $(-2)$-curves, we must have $C^2 \geq 0$.  

  If $C^2=0$, then $m=2$, and by Proposition \ref{prop:2} the divisor $C$ cannot be primitive. Therefore, $C \cdot H \geq 2\phi(H)$, which implies
  $\frac{C \cdot H}{2} = \phi(H)$, a contradiction.

  Assume now that $C^2>0$. By Proposition \ref{prop:sessub} we have that $m=\phi(C)+1$ and $C$ must be as in one of the cases (i)-(iv). Since $C$ must also be $2$-divisible by Proposition \ref{prop:1}, we see that we must be in case (ii), that is, $C \equiv 2(E_9+E_{10}+E_{9,10})$, with $l(C)=\phi(C)=6$ (again by Proposition \ref{prop:sessub}), whence $m=7$. By Theorem \ref{thm:mainmod} all such pairs $(S,\O_S(C))$ form two irreducible components in the moduli space of polarized Enriques surfaces, determined by whether $\O_S(C)$ is $2$-divisible in $\Pic S$ or not. 

  Consider $X$ as in \S \ref{sec:fl} and isotropic divisors $E_i^0\equiv (\f,\ell-\e_i)$, for $i \in \{1,2\}$, and $E_3^0\equiv (\f_,2\ell-\e_3-\e_4-\e_5-\e_6)$ on $X$, as in Example \ref{ex:iso}. Then $E_1^0\cdot E_2^0=1$ and $E_1^0\cdot E_3^0=E_2^0\cdot E_3^0=2$.
  Set $L^0:=2(E_1^0+E_2^0+E_3^0)\equiv(6\f,8\ell-2(\e_1+\cdots+\e_6))$. 
  By Theorems \ref{thm:mainmod} and \ref{thm:deform}
    the pair 
    $(X,L^0)$ or $(X,L^0+K_X)$ deforms, in a one-parameter family, to $(S,\O_S(C))$. The rational curve $C$ must land in one of the limit linear systems of $|\O_S(C)|$, which means it has a limit of the form $C_R \cup_T C_P$ with $C_R \subset R$ such that $C_R \equiv 6\f(-aT)$ for some $a \in \ZZ$, and $C_P \subset P$ such that $C_P \sim 8\ell-2(\e_1+\cdots+\e_6)+aT$ with $C_R \cap T=C_P \cap T$ (as the limit curve cannot contain $T$, since $T$ is elliptic). Since $C_R$ must contain only rational components, we must have $C_R \cdot \f=0$, whence $a=0$.
    Since any fiber $\f$ intersects $T$ in two distinct points, $C_R \cap T$ has a point of multiplicity at most $6$. Thus, by Lemma \ref{limit-sing-point}, the limit of the $7$-uple point of $C$ cannot land on $T$, but must be a point of multiplicity at least $7$ on $C_R \setminus T$ or $C_P \setminus T$.
Since $C_R$ consists of six  smooth fibers, possibly coinciding, it does not have any point of multiplicity  $>6$. Thus,
$C_P \in |8\ell-2(\e_1+\cdots+\e_6)|$ has a point $x_0$ of multiplicity $m_0 \geq
7$ off $T$.  Since $C_P \cdot (\ell-\e_i)=6<m_0$, and $|\ell-\e_i|$ is a pencil, the member of the pencil passing through $x_0$ must be a component of $C_P$.
Thus, $C_P-\displaystyle\sum_{i=1}^6(\ell-\e_i)=2\ell-(\e_1+\cdots+\e_6)$ is effective, which is impossible, since $y_1,\ldots,y_6$ can be chosen general in $\PP^2$.
This gives the desired contradiction.
\end{proof}

\end{document}